\newtheorem{theorem*}{Theorem}		
\newtheorem{corollary*}{Corollary}		
\newtheorem{theorem}{Theorem}[section]
\newtheorem{lemma}[theorem]{Lemma}
\newtheorem{proposition}[theorem]{Proposition}
\newtheorem{corollary}[theorem]{Corollary} 
\theoremstyle{definition}
\newtheorem{definition}[theorem]{Definition}
\newtheorem{example}[theorem]{Example}
\theoremstyle{remark}
\newtheorem{remark}[theorem]{Remark}
\newtheorem{remarks}[theorem]{Remarks}
\newcommand{\R}{\mathbb{R}}
\newcommand{\Rbar}{\overline{\mathbb{R}}}
\newcommand{\C}{\mathbb{C}}
\newcommand{\CC}{\mathbb{C}}
\newcommand{\A}{\mathbb{A}}
\newcommand{\G}{\mathbb{G}}
\newcommand{\calA}{\mathcal{A}}
\newcommand{\calM}{\mathcal{M}}
\newcommand{\calX}{\mathcal{X}}
\newcommand{\calY}{\mathcal{Y}}
\newcommand{\calZ}{\mathcal{Z}}
\newcommand{\frakS}{\mathfrak{S}}
\newcommand{\bfp}{\mathbf{p}}
\DeclareMathOperator{\Spec}{Spec}
\DeclareMathOperator{\Hom}{Hom}
\DeclareMathOperator{\Isom}{Isom}
\DeclareMathOperator{\trop}{trop}
\DeclareMathOperator{\id}{id}
\DeclareMathOperator{\HOM}{HOM}
\title[Tropicalization is a stack quotient]{Tropicalization is a non-Archimedean analytic stack quotient}
\author{Martin Ulirsch}
\address{Department of Mathematics, Brown University, Providence, RI 02912, USA}
\email{\href{mailto:ulirsch@math.brown.edu}{ulirsch@math.brown.edu}}
\urladdr{\url{http://www.math.brown.edu/~ulirsch/index.html}}
\subjclass[2010]{14T05; 14A20; 32P05; }
\date{\today}
\thanks{The author's research was supported in part by funds from BSF grant 201025 and NSF grants DMS0901278 and DMS1162367.} 
\begin{document}

\maketitle

\begin{abstract}
For a complex toric variety $X$ the logarithmic absolute value induces a natural retraction of $X$ onto the set of its non-negative points and this retraction can be identified with a quotient of $X(\mathbb{C})$ by its big real torus. We prove an analogous result in the non-Archimedean world: The Kajiwara-Payne tropicalization map is a non-Archimedean analytic stack quotient of $X^{an}$ by its big affinoid torus. Along the way, we provide foundations for a geometric theory of non-Archimedean analytic stacks, particularly focussing on analytic groupoids and their quotients, the process of analytification, and the underlying topological spaces of analytic stacks.
\end{abstract}

\setcounter{tocdepth}{1}
\tableofcontents


\section{Introduction}

Let $T\simeq \G_m^n$ be a split algebraic torus and denote by $N$ the dual of its character lattice $M$. Suppose that $X=X(\Delta)$ is a $T$-toric variety defined by a rational polyhedral fan $\Delta$ in $N_\R=N\otimes\R$. We refer the reader to \cite{Cox_toric} and \cite{Fulton_toricvarieties} for the standard notation for toric varieties and details of this beautiful theory. 

\subsection{The Archimedean case} Suppose first that $X$ is defined over $\CC$. The logarithmic absolute value on $\CC$ induces a natural continuous map 
\begin{equation*}
X(\CC)\longrightarrow N_\R(\Delta)
\end{equation*}
onto a partial compactification $N_\R(\Delta)$ of $N_\R$, whose fibers are homogenous spaces under the operation of the real torus 
\begin{equation*}
N_{S^1}=N\otimes S^1\subseteq N_{\CC^\ast}=N\otimes\CC^\ast=T(\C)\ .
\end{equation*}

Write $\Rbar=\big(\R\sqcup\{\infty\},+\big)$ with the naturally defined addition. On a $T$-invariant open affine subset $U_\sigma=\Spec \CC[S_\sigma]$ for a cone $\sigma$ in $\Delta$ we have $N_\R(\sigma)=\Hom(S_\sigma,\Rbar)$ with the topology of pointwise convergence and the map is given by
\begin{equation*}\begin{split}
U_\sigma(\CC)=\Hom\big(S_\sigma, (\CC,\cdot)\big)&\longrightarrow N_\R(\sigma)=\Hom(S_\sigma,\Rbar)\\
u&\longrightarrow -\log \vert \cdot\vert\circ u \ .
\end{split}\end{equation*} 
This map admits a continuous section, whose preimage is the locus of non-negative points $X(\CC)_{\geq 0}$ of $X$, and we can therefore reinterpret it as a retraction $X(\CC)\rightarrow X(\CC)_{\geq 0}$.

In fact, one can identify this retraction with the topological quotient map
\begin{equation*}
X(\CC)\longrightarrow X(\CC)/N_{S^1}
\end{equation*}
where the operation of $N_{S^1}$ on $X(\CC)$ is induced by the natural operation of $T$ on $X$ (see \cite[Proposition 12.2.3]{Cox_toric}). Furthermore, if $X$ is projective with a chosen polarization, defined by a lattice polytope $P$ in $M_\R=M\otimes\R$, then there is a natural moment map $X(\CC)\rightarrow P$, whose restriction to $X(\CC)_{\geq 0}$ is a homeomorphism (see \cite[Section 4.2]{Fulton_toricvarieties}). 
 
\subsection{The non-Archimedean case}

Let $k$ be a non-Archimedean field, possibly carrying the trivial norm. Kajiwara \cite{Kajiwara_troptoric} and, independently, Payne \cite{Payne_anallimittrop} have defined a continuous tropicalization map 
\begin{equation*}
\trop_\Delta\mathrel{\mathop:}X^{an}\longrightarrow N_\R(\Delta)
\end{equation*} 
from the non-Archimedean analytic space $X^{an}$ in the sense of Berkovich (see \cite{Berkovich_book} and \cite{Berkovich_etalecoho}) into $N_\R(\Delta)$ that can be identified with a natural deformation retraction onto the \emph{non-Archimedean skeleton} $\frakS(X)$ of $X^{an}$ (see \cite[Section 2]{Thuillier_toroidal} and Lemma \ref{lemma_trop=skeleton} below). In  \cite[Remark 3.3]{Payne_anallimittrop}) it has been suggested that $\trop_\Delta$ is a non-Archimedean version of the moment map, a fact that has been established by Kajiwara in \cite[Theorem 2.2]{Kajiwara_troptoric}, when $X$ is projective.  

Write $T^\circ$ for the \emph{affinoid torus} 
\begin{equation*}
T^\circ=\big\{x\in T^{an}\big\vert \vert\chi^m\vert_x=1 \textrm{ for all $m\in M$ }\big\} \ , 
\end{equation*} 
an analytic subgroup of the analytic group $T^{an}$ that forms the natural analogue of $N_{S^1}$ in the non-Archimedean world. The torus operation $T\times X\rightarrow X$ induces an operation of $T^{an}$, and therefore of $T^\circ$, on $X^{an}$. Unfortunately we cannot take the quotient of $X^{an}$ by $T^\circ$ in the category of topological spaces, since the underlying set of $T^\circ$ does not admit a group structure. 

In Section \ref{section_stacks} we work out foundations for a geometric theory of \emph{non-Archimedean analytic stacks}, geometric stacks over the category of non-Archimedean analytic spaces in the sense of Berkovich, which allows us to take such quotients. Based on this framework we develop in Section \ref{section_topology} the notion of an underlying topological space $\vert\calX\vert$ of a non-Archimedean analytic stack $\calX$. Using this language we prove the following Theorem \ref{thm_trop=quot} identifying the tropicalization map $\trop_\Delta$ with the non-Archimedean analytic stack quotient $X^{an}$ by $T^\circ$, in complete analogy with the corresponding result in the Archimedean case. 

\begin{theorem}\label{thm_trop=quot} There is a natural homeomorphism $\mu_\Delta\mathrel{\mathop:}\big\vert[X^{an}/T^\circ]\big\vert\xrightarrow{\sim}N_\R(\Delta)$ that makes the diagram
\begin{center}\begin{tikzpicture}
  \matrix (m) [matrix of math nodes,row sep=2em,column sep=3em,minimum width=2em]
  {  
  &X^{an}&  \\ 
  \big\vert[X^{an}/T^\circ]\big\vert&  &N_\R(\Delta)  \\ 
  };
  \path[-stealth]
    (m-1-2) edge node [above right] {$\trop_\Delta$} (m-2-3)
    		edge (m-2-1)
    (m-2-1) edge node [below] {$\mu_{\Delta}$} node [above] {$\sim$} (m-2-3);		
\end{tikzpicture}\end{center}
commute. 
\end{theorem}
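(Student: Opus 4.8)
My plan is to construct $\mu_\Delta$ by descending $\trop_\Delta$ through the quotient map and then to identify it as the comparison homeomorphism between two quotient presentations of $\vert X^{an}\vert$ by the same equivalence relation. First I would record \emph{invariance}: the $T^\circ$-action is induced by the multiplication $T\times X\to X$, and every $g\in T^\circ$ satisfies $\vert\chi^m\vert_g=1$ for all $m\in M$, so multiplicativity of seminorms forces $\vert\chi^m\vert_{g\cdot x}=\vert\chi^m\vert_x$ and hence $\trop_\Delta(g\cdot x)=\trop_\Delta(x)$. By the foundational results of Sections~\ref{section_stacks} and~\ref{section_topology} the space $\big\vert[X^{an}/T^\circ]\big\vert$ is the topological quotient of $\vert X^{an}\vert$ by the equivalence relation cut out by the two legs $s,t\colon\vert T^\circ\times X^{an}\vert\rightrightarrows\vert X^{an}\vert$ of the action groupoid (with $s$ the projection and $t$ the action). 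Invariance says this relation refines the fibers of $\trop_\Delta$, so $\trop_\Delta$ descends to a continuous $\mu_\Delta$ making the triangle commute; surjectivity of $\mu_\Delta$ follows from surjectivity of $\trop_\Delta$ (Kajiwara, Payne), or concretely from the weighted Gauss points lying over each $v\in N_\R(\Delta)$.

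The crux is the converse: any two points in a common fiber of $\trop_\Delta$ are related by the groupoid. Working on an invariant affine chart $U_\sigma^{an}$ and stratifying by torus orbits, it suffices to treat the dense torus, the boundary strata being handled by the same mechanism. So let $P,Q\in T^{an}$ satisfy $\trop P=\trop Q=v$. Since the analytified multiplication induces a \emph{surjection} $(T\times T)^{an}\twoheadlongrightarrow T^{an}\times T^{an}$ on underlying spaces, I may choose a point $\xi\in (T\times T)^{an}$ whose two projections are $P$ and $Q$. Writing the groupoid via the open immersion $(g,w)\mapsto(gw,w)$, the ``group coordinate'' of $\xi$ is $g=uw^{-1}$, and for every $m$ multiplicativity of $\xi$ gives $\vert\chi^m\vert_g=\vert\chi^m\vert_P/\vert\chi^m\vert_Q=e^{-\langle v,m\rangle}/e^{-\langle v,m\rangle}=1$. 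Thus $\xi$ actually lies in $\vert T^\circ\times T^{an}\vert$, with $s(\xi)=Q$ and $t(\xi)=P$, so $(P,Q)$ already lies in the image of $(s,t)$. Hence the fibers of $\trop_\Delta$ coincide exactly with the groupoid orbits, and $\mu_\Delta$ is a bijection.

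Finally I would upgrade this continuous bijection to a homeomorphism. The quotient map $q\colon\vert X^{an}\vert\to\big\vert[X^{an}/T^\circ]\big\vert$ is a topological quotient by the construction of Section~\ref{section_topology}, while by Lemma~\ref{lemma_trop=skeleton} the map $\trop_\Delta$ is identified with the retraction of $X^{an}$ onto its skeleton $\frakS(X)\cong N_\R(\Delta)$ and is therefore itself a quotient map. Two quotient maps of $\vert X^{an}\vert$ inducing the same partition differ by a unique homeomorphism between their targets, and this homeomorphism is precisely $\mu_\Delta$. To globalize, I would observe that each $U_\sigma^{an}\hooklongrightarrow X^{an}$ is $T^\circ$-invariant and open, hence induces an open immersion of stack quotients and so an open immersion $\big\vert[U_\sigma^{an}/T^\circ]\big\vert\hooklongrightarrow\big\vert[X^{an}/T^\circ]\big\vert$ compatible with the open cover $N_\R(\sigma)\hooklongrightarrow N_\R(\Delta)$; the local homeomorphisms $\mu_\sigma$ then glue to the desired $\mu_\Delta$.

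I expect the technical heart to be the second paragraph, the identification of $\trop_\Delta$-fibers with groupoid orbits. The difficulty is exactly the one flagged in the introduction: the underlying set of $T^\circ$ carries no group law and $\vert T^\circ\times X^{an}\vert\neq\vert T^\circ\vert\times\vert X^{an}\vert$, so the element connecting $P$ and $Q$ cannot be found ``inside $T^\circ$'' but must be produced as a genuine analytic point of the product via surjectivity of the analytified multiplication, after which multiplicativity of seminorms does the bookkeeping. A secondary subtlety is ensuring that the boundary strata of $X$, where some $\vert\chi^m\vert$ vanish, are absorbed into the same seminorm construction on the monoid algebra $k[S_\sigma]$.
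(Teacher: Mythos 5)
Your proposal is correct, but its technical heart is genuinely different from the paper's. Where you identify the fibers of $\trop_\Delta$ with the groupoid orbits by an abstract lifting argument --- choosing, via surjectivity of $\vert(T\times T)^{an}\vert\rightarrow\vert T^{an}\vert\times\vert T^{an}\vert$ (which is precisely Lemma \ref{lemma_morphismstopspace}\,(ii) applied over $\calM(k)$), an arbitrary point over $(P,Q)$ and then shearing it into $\vert T^\circ\times T^{an}\vert$ by a seminorm computation --- the paper writes down one explicit canonical point: for each $x\in U_\sigma^{an}$ the Gauss-type point $\eta\hat{\otimes}x\in T^\circ\times U_\sigma^{an}$ of Lemma \ref{lemma_otimeshatnonbeth}, satisfying $\pi^{an}(\eta\hat{\otimes}x)=x$ and $\mu^{an}(\eta\hat{\otimes}x)=\bfp_\sigma(x)$. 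This relates every point directly to its image under the retraction of Lemma \ref{lemma_trop=skeleton}, so two points with equal tropicalization are identified in the quotient through their common skeleton point; no stratification by orbits, no shearing automorphism, and no choice of lift are needed, and the construction works uniformly on all of $U_\sigma^{an}$, boundary included. Your route buys a sharper set-theoretic statement (any two points of a common fiber are related by a \emph{single} groupoid point, not merely through a third, which is all that Proposition \ref{prop_topanalstack}\,(ii) requires), while the paper's buys canonicity and the simultaneous identification of the quotient with the skeleton. Your topological upgrade also differs and is fine: comparing two quotient maps inducing the same partition, with $\trop_\Delta$ a quotient map because, via Lemma \ref{lemma_trop=skeleton}, it is a retraction and hence admits a continuous section --- a clean alternative to the paper's appeal to continuity and properness of $\bfp_\sigma$. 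Two points to tighten: first, the pointwise notation $\vert\chi^m\vert_{g\cdot x}$ in your invariance step is not literally meaningful for Berkovich points and should be replaced by the product-point computation $\vert\chi^s\otimes\chi^s\vert_y=\vert\chi^s\otimes 1\vert_y\,\vert 1\otimes\chi^s\vert_y$ (the paper's first bullet), which your later paragraphs show you understand; second, ``the boundary strata being handled by the same mechanism'' hides a real step --- on an orbit $O(\tau)$ the action factors through the quotient torus with character lattice $M\cap\tau^\perp$, your shearing argument there produces a point of $T(\tau)^\circ\times O(\tau)^{an}$, and you must lift it to $T^\circ\times O(\tau)^{an}$, e.g.\ using that $M\cap\tau^\perp$ is saturated in $M$, so the inclusion splits and the corresponding section $T(\tau)\rightarrow T$ carries $T(\tau)^\circ$ into $T^\circ$. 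This is routine but not verbatim the torus case, so it deserves an explicit sentence.
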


In other words, on the level of underlying topological spaces, the Kajiwara-Payne tropicalization map $\trop_\Delta$ and the analytic stack quotient map $X\rightarrow[X^{an}/T^{\circ}]$ are equal. Note that by Proposition \ref{prop_topanalstack} below Theorem \ref{thm_trop=quot} implies the well-known fact that the tropicalization map $\trop_\Delta\mathrel{\mathop:}X^{an}\rightarrow N_\R(\Delta)$ is a topological quotient map, which also follows from the properness of $\trop_\Delta$ (see \cite[Proposition 2.1 and Section 3]{Payne_anallimittrop}). 

In particular, Theorem \ref{thm_trop=quot} says that $N_\R(\Delta)$, a purely combinatorial object that serves as a tropical analogue of a toric variety, canonically admits the structure of a non-Archimedean analytic stack. Based on this observation, one is led to speculate that tropical geometry can be axiomatized as the geometry of "affinoid substacks" of $\big[X^{an}/T^\circ\big]$. The authors hopes to return to this speculation at some later point, once the necessary theory of non-Archimedean analytic stacks has been developed, and to make this statement precise.  

It is worth noting that the operation of $T^\circ$ on $X^{an}$ already lies at the very heart of the construction of the non-Archimedean skeleton $\frakS(X)$ of $X^{an}$, as explained in \cite[Theorem 6.1.5]{Berkovich_book} for $\mathbb{P}^n$ and \cite[Section 2]{Thuillier_toroidal} in the case of $k$ carrying the trivial absolute value. In fact, our proof of Theorem \ref{thm_trop=quot} (see Section \ref{section_trop=quot}) essentially goes by showing that the skeleton $\frakS(X)$ of $X^{an}$ is the set of $T^\circ$-invariant points of $X^{an}$.

\begin{example}\label{example_A1/Gm}
Consider the affine line $\A^1$ over a trivially valued field $k$. The non-Archimedean unit circle $\G_m^\circ$ is given as the subset of elements in $x\in(\A^1)^{an}$ with $\vert t\vert_x=1$, where $t$ denotes a coordinate on $\A^1$. The skeleton $\frakS(\A^1)$ of $(\A^1)^{an}$ is the line connecting $0$ to $\infty$. It is precisely the set of "$\G_m^\circ$-invariant" points in $(\A^1)^{an}$ and therefore naturally homeomorphic to the topological space underlying $\big[(\A^1)^{an}\big/\G_m^\circ\big]$.

\begin{center}
\begin{tikzpicture}
\draw (6,2) circle (0.08 cm);
\fill (6,0) circle (0.08 cm);
\fill (4,-2) circle (0.08 cm);
\fill (5,-2) circle (0.08 cm);
\fill (6,-2) circle (0.08 cm);
\fill (7,-2) circle (0.08 cm);
\fill (8,-2) circle (0.08 cm);

\draw (6,0) -- (6,1.92);
\draw (6,0) -- (4,-2);
\draw (6,0) -- (5,-2);
\draw (6,0) -- (6,-2);
\draw (6,0) -- (7,-2);
\draw (6,0) -- (8,-2);

\node at (6,-2.5) {$0$};
\node at (6,2.5) {$\infty$};
\node at (6.5,0) {$\eta$};
\node at (6,-3.5) {$(\A^1)^{an}$};

\fill (4.3,-1) circle (0.03cm);
\fill (4.1,-1) circle (0.03cm);
\fill (3.9,-1) circle (0.03cm);
\fill (7.7,-1) circle (0.03cm);
\fill (7.9,-1) circle (0.03cm);
\fill (8.1,-1) circle (0.03cm);

\draw (0,2) circle (0.08 cm);
\fill (0,0) circle (0.08 cm);
\fill (-2,-2) circle (0.08 cm);
\fill (-1,-2) circle (0.08 cm);
\draw (0,-2) circle (0.08 cm);
\fill (1,-2) circle (0.08 cm);
\fill (2,-2) circle (0.08 cm);

\draw (0,0) -- (-2,-2);
\draw (0,0) -- (-1,-2);
\draw (0,0) -- (1,-2);
\draw (0,0) -- (2,-2);

\node at (0,-2.5) {$0$};
\node at (0,2.5) {$\infty$};
\node at (0.5,0) {$\eta$};
\node at (0,-3.5) {$\G_m^{\circ}$};

\fill (-1.7,-1) circle (0.03cm);
\fill (-1.9,-1) circle (0.03cm);
\fill (-2.1,-1) circle (0.03cm);
\fill (1.7,-1) circle (0.03cm);
\fill (1.9,-1) circle (0.03cm);
\fill (2.1,-1) circle (0.03cm);

\fill (11,0) circle (0.08cm);
\fill (11,-2) circle (0.08cm);
\draw (11,2) circle (0.08cm);

\draw (11,0) -- (11,-2);
\draw (11,0) -- (11,1.92);

\node at (11.5,0) {$\eta$};
\node at (11,-3.5) {$\big[(\A^1)^{an}\big/\G_m^\circ\big]$};
\node at (11,-2.5) {$0$};
\node at (11,2.5) {$\infty$};

\node at (3,0) {$\curvearrowright$};
\node at (3,-3.5) {$\curvearrowright$};

\end{tikzpicture}
\end{center}
\end{example}

Example \ref{example_A1/Gm} also illustrates why it is important to take quotients with respect to the operation of the affinoid torus $T^\circ$ instead of $T^{an}$. The topological space underlying $[X^{an}/T^{an}]$ is homeomorphic to $\big\vert[X/T]\big\vert$, whose points correspond to the $T$-orbits in $X$. The topology on $[X/T]$ is determined by the poset structure on the set of $T$-orbits that is given by containment of orbit closures in $X$, and in particular not Hausdorff. 

In order to get the gist of the argument in the proof of Theorem \ref{thm_trop=quot} the reader may want to mostly skip the very technical Section \ref{section_stacks}, only referring to the necessary definitions when needed and taking some results, such as Proposition \ref{prop_groupoidquot=analstack}, as a black box. 

\subsection{Applications}

\subsubsection{Non-Archimedean geometry of Artin fans}
In \cite{Ulirsch_nonArchArtin} (also see \cite[Section V]{Ulirsch_thesis}) we study the non-Archimedean analytic geometry of \emph{Artin fans}, certain locally toric Artin stacks that have been introduced in \cite{AbramovichChenMarcusWise_boundedness} and \cite{AbramovichWise_invlogGromovWitten} (also see \cite{AbramovichChenMarcusUlirschWise_logsurvey}). The goal is to relate this theory to the tropical geometry of logarithmic schemes, as introduced in \cite{Ulirsch_functroplogsch}. 

Let $k$ be an algebraically closed field that is endowed with the trivial absolute value. By \cite[Proposition 3.1.1]{AbramovichChenMarcusWise_boundedness} every fine and saturated logarithmic scheme $X$, locally of finite type over $k$, admits a canonical strict morphism $X\rightarrow\calA_X$ into an Artin fan $\calA_X$. We show that on the level of underlying topological spaces the analytic morphism
\begin{equation}\label{eq_Artinfanbeth}
X^\beth\longrightarrow \calA_X^\beth
\end{equation}
is nothing but the tropicalization map of the logarithmic scheme $X$ constructed in \cite{Ulirsch_functroplogsch}, where $(.)^\beth$ is Thuillier's analytic generic fiber functor over trivially valued fields (see \cite[Proposition et D\'efinition 1.3]{Thuillier_toroidal}, \cite[Section 5]{Ulirsch_nonArchArtin}, and \cite[Section V.3]{Ulirsch_thesis}). 

Theorem \ref{thm_trop=quot} is a first instance of this connection that is of independent interest. The Artin fan of a $T$-toric variety $X$ is the toric quotient stack $\calA_X=[X/T]$. If $X$ is complete, then Theorem \ref{thm_trop=quot} says that on the level of underlying topological spaces the analytic stack quotient map
\begin{equation*}
X^{an}=X^\beth\longrightarrow \calA_X^\beth=[X^\beth/T^\beth]
\end{equation*}
is nothing but the tropicalization map $\trop_\Delta$ of $X$.

The identification of \eqref{eq_Artinfanbeth} with the tropicalization map, or, in the logarithmically smooth case, with the natural deformation retraction of $X^\beth$ onto its toroidal skeleton (see \cite{Thuillier_toroidal} and \cite[Theorem 1.2]{Ulirsch_functroplogsch}) lies at the very heart of recent results of Ranganathan \cite{Ranganathan_ratcurvesontorvars}, making explicit the relationship between the tropical and non-Archimedean geometry of moduli spaces (see \cite{AbramovichCaporasoPayne_tropicalmoduli}), tropical enumerative geometry (see \cite{Mikhalkin_enumerativetrop} and \cite{NishinouSiebert_toricdeg&tropcurves}), and logarithmic Gromov-Witten Theory (see \cite{Chen_stablelogmaps} and \cite{GrossSiebert_logGromovWitten}). 

\subsubsection{Realizability of tropical curves over Artin fans}
In \cite{Ranganathan_Artinfanrealizability} the author describes another application of Theorem \ref{thm_trop=quot} to the \emph{realizability problem} for tropical curves by algebraic curves. In general not every tropical curve $\Gamma$ in $N_\R(\Delta)$ arises as the tropicalization of an algebraic curve in $X$. The main reason for this behavior is the phenomenon of \emph{superabundance}, a cohomological obstruction to the existence of deformations of maps from a logarithmically smooth curve into a toric variety that has an interpretation purely in terms of the combinatorial geometry of $\Gamma$ (see \cite[Section 2.6]{Mikhalkin_enumerativetrop}, \cite{Speyer_thesis}, and \cite[Section 1]{Katz_lifting}, as well as \cite[Section 4]{CheungFantiniParkUlirsch_faithfulrealizability}). However, this cohomological obstruction vanishes for maps from a logarithmically smooth curve into the Artin fan $\calA_X=[X/T]$ of $X$ leading to the main result of \cite{Ranganathan_Artinfanrealizability} that every tropical curve can be realized as a curve mapping to the Artin fan $\calA_X$. 


\subsection{An alternative approach to analytic stacks}

A theory of non-Archimedean analytic stacks similar to ours has already been outlined in \cite[Section 6.1]{Yu_Gromovcompactness}, in the context of non-Archimedean analytic Gromov-Witten Theory, and further developed in \cite{PortaYu_analyticnstacks}. The main difference from our approach is that the above authors work over the category $\mathbf{Rig}$ of quasi-separated rigid analytic spaces with locally finite admissible affinoid coverings, endowed with the Tate-\'etale topology, while, in this article, we work with the category of non-Archimedean analytic spaces  in the sense of Berkovich \cite{Berkovich_etalecoho}, endowed with the \'etale topology constructed in \cite[Section 4]{Berkovich_etalecoho}. 

The category $\mathbf{Rig}$ is equivalent to the category of paracompact strictly $k$-analytic spaces. Since \'etale morphisms are also Tate-\'etale, an analytic stack in our setting is automatically an analytic stack in the setting of \cite{Yu_Gromovcompactness} and \cite{PortaYu_analyticnstacks}. Moreover, since the Tate-\'etale topology is finer than the \'etale topology  in the sense of \cite{Berkovich_etalecoho} (as it includes e.g. closed affinoid domains), Porta and Yue Yu's class of analytic stacks is strictly bigger than ours. This distinction is particularly relevant when studying generic fiber functors, such as Thuillier's $(.)^\beth$-functor (see \cite[Section 5]{Ulirsch_nonArchArtin}). 

In Porta and Yue Yu's setup one can associate to a non-Archimedean analytic stack an underlying topological space in complete analogy with Section \ref{section_topology}. For many important examples, such as the toric quotients in Theorem \ref{thm_trop=quot}, or analytifications of algebraic stacks, both definitions lead to the same underlying topological spaces. 


\subsection{Conventions and prerequisites}

We denote the category of non-Archimedean analytic spaces in the sense of \cite{Berkovich_etalecoho} by $\mathbf{An}_k$. Given an analytic space $S$, we denote by $(\mathbf{An}_k/S)$ the category of analytic spaces over $S$. A surjective morphism $f\mathrel{\mathop:}X\rightarrow Y$ of analytic spaces is said to be \emph{universally submersive}, if every base change of $f$ is \emph{submersive}, i.e. a topological quotient map.

Following Ducros \cite[Section 3.1]{Ducros_familiesBerkovich} a morphism $f\mathrel{\mathop:}X\rightarrow Y$ between good analytic spaces is said to be \emph{naively flat}, if for all $x\in X$ and $y=\phi(x)$ the $\mathcal{O}_{Y,y}$-algebra $\mathcal{O}_{X,x}$ is flat. As seen in \cite[Section 3.4]{Ducros_familiesBerkovich} this notion is, in general, not preserved under base change. As a solution to this issue, Ducros \cite[Section 3.1.4.2]{Ducros_familiesBerkovich} defines a morphism $f\mathrel{\mathop:}X\rightarrow Y$ to be \emph{universally flat} (or short: \emph{flat}), if all of its good base changes are naively flat. It is an immediate consequence of this definition that being flat is stable under base change. So, in particular, all analytic domains in $X$ are flat over $X$. 

This shows that, in contrast to the category of schemes, not all flat morphisms are open maps. For quasi-finite morphisms, however, this notion of flatness agrees with the one introduced in \cite[Section 3.2]{Berkovich_etalecoho}. Therefore by \cite[Proposition 3.27]{Berkovich_etalecoho} a quasi-finite flat morphism $f\mathrel{\mathop:}X\rightarrow Y$ of analytic spaces is open. 

A morphism $f\mathrel{\mathop:}X\rightarrow Y$ is said to be \emph{$G$-smooth} of relative dimension $n$, if it is flat and the sheaf of relative differentials $\Omega_{X_G/Y_G}$ in the sense of \cite[Section 3.3]{Berkovich_etalecoho} is locally free of dimension $n$. Accordingly, a $G$-smooth morphism of relative dimension $0$ is called \emph{$G$-\'etale}. A morphism $f\mathrel{\mathop:}X\rightarrow Y$ is said to be \emph{\'etale}, if it is quasi-finite and $G$-\'etale. This definition is equivalent to \cite[Definition 3.3.4]{Berkovich_etalecoho}. 

Let $X$ be an analytic space. As defined in \cite[Section 4.1]{Berkovich_etalecoho}, an \emph{\'etale covering} of an analytic space $U$ over $X$ is given by a family of \'etale morphisms $(f\mathrel{\mathop:}U_i\rightarrow U)$ such that $\bigcup_i f(U_i)=U$. The class of \'etale coverings defines a Grothendieck topology on $\mathbf{An}_k$, called the \emph{\'etale topology}. We denote the resulting \emph{\'etale sites} by $(\mathbf{An}_k)_{et}$ and, more generally, for an analytic space $S$ by $(\mathbf{An}_k/S)_{et}$.

We refer the reader to \cite[Section 2]{ConradTemkin_descent} for a long list of properties of morphisms in $\mathbf{An}_k$ that can be checked on an \'etale covering of the target. For our purposes it is enough to keep in mind that this list includes flat, $G$-smooth, $G$-\'etale, \'etale, and surjective morphisms. With the same methods as the ones employed in the proofs of \cite[Theorem 2.4 and 2.5]{ConradTemkin_descent} one can show that whether a morphism is flat, $G$-smooth, $G$-\'etale, \'etale, or surjective can also be checked on an \'etale covering of the domain. An easy argument shows that for a surjective morphism to be universally submersive can be verified on an \'etale covering of the target as well as on an \'etale covering of the domain.


\subsection{Acknowledgements}
The author would like to express his gratitude to Dan Abra\-movich for his constant support and encouragement. Thanks are also due to Johan de Jong and his collaborators for creating the Stacks Project \cite{stacks-project}, to Sam Payne for many comments an earlier version of this article, to Martin Olsson for sharing a draft of his upcoming book \cite{Olsson_algspaces&stacks}, to Michael Temkin for his advice concerning the descent theory of non-Archimedean analytic spaces, and to the anonymous referee for many insightful comments. Particular thanks are due to Brian Conrad for finding an inaccuracy in an earlier version of Proposition \ref{prop_etaleequivquot=etaleanalspace} and for subsequently suggesting the statement of Lemma \ref{lemma_quotientsofetalecovers} to the author. During the work on this article the author also profited from discussions with Matt Baker, Dori Bejleri, Joseph Rabinoff, Dhruv Ranganathan, and Tony Yue Yu, all of whom he would like to thank heartfully. 


\section{A geometric theory of non-Archimedean analytic stacks}\label{section_stacks}

The purpose of this section is to lay the foundations for a theory of \emph{geometric stacks}, in the sense of \cite{Simpson_nstacks}, over the category of non-Archimedean analytic spaces in the sense of Berkovich (see \cite{Berkovich_book} and \cite{Berkovich_etalecoho}). A central role in this theory is played by the notion of \emph{analytic groupoids}, groupoid objects in $\mathbf{An}_k$, and their quotient stacks. Using these techniques we construct an analytification pseudo-functor that associates to an algebraic stack $\calX$, locally of finite type over $k$, an analytic stack $\calX^{an}$.   

We freely use the language of categories fibered in groupoids and stacks over arbitrary sites, as developed in  \cite{Giraud_cohomologienonabelienne} and \cite{Vistoli_stacks}, and follow the notations and conventions of the Stacks Project \cite{stacks-project}. The only major difference from this edifice is that for algebraic stacks we are using the big \'etale site over the category of schemes that are locally of finite type over $k$ as an underlying site and not the fppf-site as in \cite[Tag 026O]{stacks-project}. Both approaches are equivalent by \cite[Tag 04X1]{stacks-project}.


\subsection{\'Etale analytic spaces and analytic stacks}\label{section_etanspaces&analstacks}

Given an analytic space $X$ its associated functor of points is given by
\begin{equation*}\begin{split}
h_X\mathrel{\mathop:}(\mathbf{An}_k)^{op}&\longrightarrow \mathbf{Sets}\\
T&\longmapsto X(T)=\Hom(T,X) \ .
\end{split}\end{equation*}
By Yoneda's Lemma the association $X\mapsto h_X$ faithfully embeds $\mathbf{An}_k$ into the category of pre-sheaves on $\mathbf{An}_k$ as full subcategory. In the following we may therefore safely identify $X$ with $h_X$.
A pre-sheaf on $\mathbf{An}_k$ is said to be \emph{representable} by an analytic space $X$, if it is isomorphic to $h_X$. A morphism $X\rightarrow Y$ of pre-sheaves  on $\mathbf{An}_k$ is said to be \emph{representable}, if for every morphism $T\rightarrow X$ from an analytic space $T$ the base change $X\times_YT$ is representable by an analytic space $S$.

\begin{definition}\label{def_etaleanalyticspace}
An \emph{\'etale analytic space} is a sheaf
\begin{equation*}
X\mathrel{\mathop:}(\mathbf{An}_k)_{et}^{op}\longrightarrow \mathbf{Sets}
\end{equation*}
such that there is an analytic space $U$ together with a representable morphism $U\rightarrow X$ that is surjective and \'etale. 
\end{definition}

We are going to refer to the representable surjective \'etale morphism $U\rightarrow X$ (and in a slight abuse of notation also to $U$ itself) as an \emph{atlas} of the \'etale analytic space $X$. The category of \'etale analytic spaces is the full subcategory of the category of pre-sheaves $(\mathbf{An}_k)^{op}\rightarrow \mathbf{Sets}$ whose objects are \'etale analytic spaces. 

\begin{example}
Let $X$ be an analytic space. In order to show that $X$ is an \'etale analytic space the only non-trivial fact is that  $h_X$ is a sheaf in the \'etale topology. By \cite[Proposition 4.1.3]{Berkovich_etalecoho} this is true when $X$ is a good analytic space and the general case follows from \cite[Theorem 4.1.2]{ConradTemkin_algspaces}, as explained in \cite[Remark 4.1.5]{Berkovich_etalecoho}.
\end{example}

A category fibered in groupoids over $\mathbf{An}_k$ is said to be \emph{representable by an \'etale analytic space $X$}, if it is equivalent to $(\mathbf{An}_k/X)$, the category of analytic spaces over $X$. In this case we again  identify $X$ with $(\mathbf{An}_k/X)$, which is justified by the $2$-Yoneda Lemma (see \cite[Tag 04SS]{stacks-project}). A morphism $\calX\rightarrow\calY$ of categories fibered in groupoids is \emph{representable by \'etale analytic spaces}, if for every morphism $T\rightarrow\calY$ from an analytic space $T$ the $2$-fiber product $\calX\times_\calY T$ is representable by an \'etale analytic space. 

\begin{lemma}\label{lemma_diag=rep}
Let $\mathcal{X}$ be a category fibered in groupoids over $\mathbf{An}_k$. The following properties are equivalent:
\begin{enumerate}[(i)]
\item The diagonal morphism $\Delta_\mathcal{X}\mathrel{\mathop:}\mathcal{X}\rightarrow \mathcal{X}\times \mathcal{X}$ is representable by \'etale analytic spaces.
\item For every analytic space $T$ and any two objects $x,y\in\mathcal{X}(T)$ the presheaf $\Isom_\mathcal{X}(x,y)$ is representable by an \'etale analytic space.
\item Every morphism $U\rightarrow \mathcal{X}$ from an analytic space $U$ to $\mathcal{X}$ is representable by \'etale analytic spaces.
\end{enumerate}
\end{lemma}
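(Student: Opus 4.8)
The plan is to prove the cyclic chain of equivalences by importing the standard diagonal arguments for algebraic stacks (as in the Stacks Project) into the setting of $\mathbf{An}_k$, with the only genuine work being a foundational stability statement about étale analytic spaces. I would first dispose of $(i)\Leftrightarrow(ii)$, which is a pure unwinding of definitions. A morphism $T\rightarrow\mathcal{X}\times\mathcal{X}$ from an analytic space $T$ is the same datum as a pair of objects $x,y\in\mathcal{X}(T)$, and the $2$-fibre product
\begin{equation*}
\mathcal{X}\times_{\Delta_\mathcal{X},\,\mathcal{X}\times\mathcal{X}}T
\end{equation*}
is canonically identified with the presheaf $\Isom_\mathcal{X}(x,y)$. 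Hence $\Delta_\mathcal{X}$ is representable by étale analytic spaces if and only if every such $\Isom$ presheaf is, which is exactly $(ii)$.

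Next I would prove $(i)\Rightarrow(iii)$ via the usual \emph{magic square}. Given a morphism $U\rightarrow\mathcal{X}$ from an analytic space and an arbitrary test morphism $T\rightarrow\mathcal{X}$, one has the standard identification
\begin{equation*}
U\times_\mathcal{X}T\;\cong\;(U\times T)\times_{\mathcal{X}\times\mathcal{X}}\mathcal{X},
\end{equation*}
where $U\times T\rightarrow\mathcal{X}\times\mathcal{X}$ is the pair of the two structure morphisms and the right-hand leg is the diagonal. Since $U\times T$ is an honest analytic space, the right-hand fibre product is a base change of $\Delta_\mathcal{X}$, hence representable by an étale analytic space by $(i)$; thus $U\rightarrow\mathcal{X}$ is representable by étale analytic spaces.

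For $(iii)\Rightarrow(i)$ I would realize the $\Isom$ presheaf attached to $(x,y)\colon T\rightarrow\mathcal{X}\times\mathcal{X}$ as the iterated fibre product
\begin{equation*}
\Isom_\mathcal{X}(x,y)\;\cong\;\big(T\times_{x,\mathcal{X},y}T\big)\times_{T\times T}T,
\end{equation*}
where $T\rightarrow T\times T$ is the diagonal of the analytic space $T$ (a closed immersion, in particular a morphism of honest analytic spaces). By $(iii)$ the middle factor $T\times_\mathcal{X}T$ is an étale analytic space, and the outer fibre product is simply its base change along a morphism of analytic spaces. Combined with $(i)\Leftrightarrow(ii)$, this closes the loop.

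The main obstacle — and the point deserving the most care — is the foundational input that makes both $(i)\Rightarrow(iii)$ and $(iii)\Rightarrow(i)$ go through: \emph{étale analytic spaces are stable under base change along morphisms of analytic spaces}. Concretely, if $Z$ is an étale analytic space with atlas $V\rightarrow Z$ (an analytic space, surjective and étale over $Z$) and $S\rightarrow S'$ is a morphism in $\mathbf{An}_k$ with a given map $Z\rightarrow S'$, then $Z\times_{S'}S$ should again be an étale analytic space, with atlas $V\times_{S'}S\rightarrow Z\times_{S'}S$. Verifying this requires three standard but non-vacuous facts: that $\mathbf{An}_k$ is closed under the relevant fibre products so that $V\times_{S'}S$ is again an analytic space; that surjectivity and étaleness of an atlas are preserved under base change (which holds in $\mathbf{An}_k$); and that $Z\times_{S'}S$ remains a sheaf for the étale topology. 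Once this stability is recorded, all three implications follow formally from the fibre-product manipulations above.
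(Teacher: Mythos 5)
Your proof is correct and is precisely the adaptation of the standard diagonal argument (Stacks Project Tag 045G) that the paper itself invokes and leaves to the reader, including the one point of genuine analytic content: stability of \'etale analytic spaces under base change along morphisms of analytic spaces, which you rightly reduce to fibre products in $\mathbf{An}_k$ and base-change stability of representable surjective \'etale atlases. One minor quibble: in $(i)\Rightarrow(iii)$ the magic square exhibits $U\times_\mathcal{X}T$ directly as a base change of $\Delta_\mathcal{X}$ along the morphism from the analytic space $U\times T$, so the stability lemma is really only needed for $(iii)\Rightarrow(i)$.
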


The proof of Lemma \ref{lemma_diag=rep} is a simple adaption of \cite[Tag 045G]{stacks-project} to the analytic situation (also see \cite[Proposition 7.13]{Vistoli_intersectiontheorystacks}) and is left to the reader.

\begin{definition}\label{def_analstack}
A stack $\calX$ over $(\mathbf{An}_k)_{et}$ is said to be \emph{analytic}, if the following two axioms hold: 
\begin{enumerate}[(i)]
\item The diagonal morphism $\Delta\mathrel{\mathop:}\mathcal{X}\rightarrow \mathcal{X}\times \mathcal{X}$ is representable by \'etale analytic spaces.
\item There is an analytic space $U$ and a morphism $U\rightarrow \mathcal{X}$ that is $G$-smooth, surjective, and universally submersive. 
\end{enumerate}
\end{definition}

We are going to refer to the morphism $U\rightarrow\mathcal{X}$ (and in a slight abuse of notation to $U$ itself) as an \emph{atlas} of $\mathcal{X}$. Note that by Lemma \ref{lemma_diag=rep} the diagonal morphism $\Delta\mathrel{\mathop:}\mathcal{X}\rightarrow \mathcal{X}\times \mathcal{X}$ being representable by \'etale analytic spaces implies that the atlas $U\rightarrow\mathcal{X}$ is representable by \'etale analytic spaces. If the atlas $U\rightarrow \calX$ can be chosen to be \'etale, we are going to refer to $\calX$ as an \emph{analytic Deligne-Mumford stack}. The \emph{$2$-category of analytic stacks} is defined to be the full subcategory of the $2$-category of categories fibered in groupoids over $(\mathbf{An}_k)_{et}$ whose objects are analytic stacks. 

\begin{example}
Let $X$ be an \'etale analytic space. Since $X$ is a sheaf in the \'etale topology, the category fibered in groupoids $(\mathbf{An}_k/X)$ is a stack over the \'etale site $(\mathbf{An}_k)_{et}$. The stack $X$ is an analytic Deligne-Mumford stack, because the surjective \'etale morphism $U\rightarrow X$ from an analytic space $U$ as in  Definition \ref{def_etaleanalyticspace} forms an atlas of $X$. 
\end{example}

\begin{remarks}\label{remark_etanspacerepdiag}\begin{enumerate}[(i)]
\item In Definition \ref{def_etaleanalyticspace} we only require the atlas $U\rightarrow X$ to be representable. From a theoretical point of view, it would be more pleasing to require every morphism $S\rightarrow X$ from an analytic space $S$ to be representable, or equivalently that the diagonal morphism $\Delta_X\mathrel{\mathop:}X\rightarrow X\times X$ is representable (see \cite[Tag 0024]{stacks-project}). Nevertheless, in this case, the proof of Proposition \ref{prop_etaleequivquot=etaleanalspace} below would require a bootstrap argument similar to \cite[Proposition A.1.1]{ConradLieblichOlsson_Nagataalgspaces} and \cite[Tag 0264]{stacks-project}, which uses more sophisticated techniques from descent theory. Unfortunately analytic analogues of these result do not seem to have appeared in the literature so far. 
\item In both Definition \ref{def_etaleanalyticspace} and Definition \ref{def_analstack} it would be theoretically more appealing to use an analogue of the fppf-topology on the category $\mathbf{An}_k$. Since, to the best of the author's knowledge, there is no consensus in the literature on the nature of this analogue, let alone a satisfying collection of descent theoretic results, we refrain from carrying out this approach. 
\end{enumerate}
\end{remarks}




\subsection{Groupoid presentations}\label{section_groupoids}

The goal of this section is to study presentations of \'etale analytic spaces by \'etale equivalence relations and  presentations of analytic stacks by analytic groupoids.

\subsubsection{\'Etale equivalence relations}

\begin{definition} Let $U$ be an analytic space. An \emph{\'etale equivalence relation} on $U$ consists of a monomorphism $R\hookrightarrow U\times U$ such that
\begin{enumerate}[(i)]
\item for all analytic spaces $T$ the subset $R(T)\subseteq U(T)\times U(T)$ defines an equivalence relation, and
\item the compositions $R\hookrightarrow U\times U\rightrightarrows U$ are \'etale.
\end{enumerate}
\end{definition}

Given an \'etale equivalence relation $R$ on an analytic space $U$, the association
\begin{equation*}
T\longmapsto U(T)/R(T) \ .
\end{equation*}
defines a pre-sheaf $U/_{pre}R$ on $\mathbf{An}_k$. We refer to the sheafification $U/R$ of $U/_{pre}R$ on $(\mathbf{An}_k)_{et}$ as the \emph{quotient} of $U$ by $R$. 

\begin{proposition}\label{prop_etaleequivquot=etaleanalspace}
Let $R$ be an \'etale equivalence relation on an analytic space $U$. Then the quotient $U/R$ is an \'etale analytic space. 
\end{proposition}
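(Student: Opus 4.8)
The plan is to verify directly that the natural morphism $p\mathrel{\mathop:}U\rightarrow U/R$ is a representable, surjective, and \'etale atlas in the sense of Definition \ref{def_etaleanalyticspace}. Since $U/R$ is defined as a sheafification it is automatically a sheaf on $(\mathbf{An}_k)_{et}$, so nothing is to be checked there. Surjectivity of $p$ as a morphism of \'etale sheaves is immediate from the construction of the quotient: every section of $U/R$ over an analytic space $T$ is, by definition of the sheafification, \'etale-locally on $T$ in the image of $U(T)\rightarrow(U/R)(T)$. It therefore remains to establish that $p$ is representable and \'etale, and the decisive input for both is the identification of the fiber product $U\times_{U/R}U$ with $R$.

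First I would prove the sheaf-theoretic identity $R\cong U\times_{U/R}U$. The two projections $R\rightrightarrows U$ become equal after composition with $p$, so there is a canonical morphism $R\rightarrow U\times_{U/R}U$; since $R\hookrightarrow U\times U$ is a monomorphism, so is this morphism. For surjectivity one observes that a $T$-point of $U\times_{U/R}U$ is a pair $(a,b)\in U(T)\times U(T)$ whose classes agree in $(U/R)(T)$, which by the sheafification description means that the pullback of $(a,b)$ lies in $R(T_i)$ along some \'etale covering $\{T_i\rightarrow T\}$. As $R$ is a sheaf and the lift of $(a,b)$ to $R$ is unique (again because $R\rightarrow U\times U$ is a mono), these local lifts glue to a section of $R$ over $T$. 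Hence $R\rightarrow U\times_{U/R}U$ is an isomorphism, and in particular $U\times_{U/R}U$ is representable with both projections \'etale.

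The main obstacle is representability of $p$ over an arbitrary base. Given a morphism $T\rightarrow U/R$ from an analytic space $T$, surjectivity of $p$ provides an \'etale covering $T'\rightarrow T$ together with a lift $s\mathrel{\mathop:}T'\rightarrow U$. Using the identity of the previous paragraph I would compute $U\times_{U/R}T'\cong R\times_{U}T'$, where the fiber product is formed along $s$ and the second projection $R\rightarrow U$; since the latter is \'etale, $R\times_{U}T'$ is representable by an analytic space and \'etale over $T'$. Thus the sheaf $F=U\times_{U/R}T$ becomes representable after the \'etale base change $T'\rightarrow T$, and to descend this one needs effectivity of \'etale descent for analytic spaces, as provided by \cite{ConradTemkin_algspaces} and \cite{ConradTemkin_descent}. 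I expect this descent step to be the genuinely delicate point of the argument: the weaker Definition \ref{def_etaleanalyticspace}, which only demands a representable atlas rather than a representable diagonal, is tailored precisely to avoid the bootstrap of \cite[Tag 0264]{stacks-project} (compare Remark \ref{remark_etanspacerepdiag}(i)), and it is here that a clean statement about quotients of \'etale covers such as Lemma \ref{lemma_quotientsofetalecovers} should be invoked.

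Once $F=U\times_{U/R}T$ is known to be representable, \'etaleness of $p$ follows formally: its base change $F\times_{T}T'\cong R\times_{U}T'\rightarrow T'$ is \'etale, being a base change of the \'etale projection $R\rightarrow U$, and \'etaleness descends along the \'etale covering $T'\rightarrow T$ of the target. Therefore $p\mathrel{\mathop:}U\rightarrow U/R$ is representable, surjective, and \'etale, exhibiting $U$ as an atlas and $U/R$ as an \'etale analytic space.
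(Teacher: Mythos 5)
Your proposal is correct and takes essentially the same route as the paper's proof: both lift the test morphism $T\rightarrow U/R$ along an \'etale cover so that it factors through $U$, identify the resulting fiber product with $R\times_U T'$ (via the identity $R\simeq U\times_{U/R}U$, which the paper uses implicitly in its chain of isomorphisms $T_i\times_T Z\simeq T_i\times_U R$), and then obtain representability by \'etale descent through Lemma \ref{lemma_quotientsofetalecovers}, with surjectivity and \'etaleness of $U\rightarrow U/R$ following formally, exactly as in the paper. The only difference is presentational: you isolate the sheaf-theoretic identification $R\simeq U\times_{U/R}U$ as a separate preliminary step, which the paper leaves folded into its computation.
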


Our approach to the proof of Proposition \ref{prop_etaleequivquot=etaleanalspace} is inspired by \cite[Tag 0264]{stacks-project}, but only uses the descent-theoretic results contained in \cite{ConradTemkin_algspaces} (see Remark \ref{remark_etanspacerepdiag} above). The statement of the following Lemma \ref{lemma_quotientsofetalecovers} has been communicated to the author by Brian Conrad. We would like to thank him for generously allowing us to include it in this article.

Let $S$ be a fixed analytic space. One can easily generalize the above notion to define an \'etale equivalence relations $R\hookrightarrow U\times_S U$ in the category of analytic spaces over $S$. In a slight abuse of notation its quotient sheaf over $\big(\mathbf{An}_k/S\big)_{et}$ will also be denoted by $U/R$. 

\begin{lemma}\label{lemma_quotientsofetalecovers}
Let $R\hookrightarrow U\times_S U$ an \'etale equivalence relation in the category of analytic spaces over $S$. If $U\rightarrow S$ is \'etale, then the quotient sheaf $U/R$ over $\big(\mathbf{An}_k/S\big)_{et}$ is representable by an analytic space $X$, which is \'etale over $S$. 
\end{lemma}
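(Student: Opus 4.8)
The plan is to construct the representing space $X$ étale-locally on $S$ and then descend, so I first record two structural facts. First I would show that $R\hookrightarrow U\times_S U$ is an \emph{open immersion}. Indeed, both projections $R\to U$ are étale by hypothesis, and $U\times_S U\to U$ is étale (base change of $U\to S$), so the cancellation property for étale morphisms (cf. the descent results invoked via \cite{ConradTemkin_descent}) shows $R\to U\times_S U$ is étale; being also a monomorphism, it is an open immersion (the non-Archimedean analogue of the fact that an étale monomorphism of schemes is an open immersion, using that étale morphisms are quasi-finite and flat, hence open by \cite[Proposition 3.27]{Berkovich_etalecoho}, and that an unramified monomorphism induces isomorphisms on local rings and residue fields). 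Thus $R$ is an open analytic subspace of $U\times_S U$ and the source and target maps $s,t\colon R\to U$ are étale and open. Second, I would note that the formation of the quotient sheaf commutes with base change: for any $S'\to S$ one has $(U/R)\times_S S'\cong U_{S'}/R_{S'}$, since restriction to the localized site $(\mathbf{An}_k/S')_{et}$ commutes with sheafification and the presheaf quotients agree on objects over $S'$.

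Next I would set up the reduction. Both the representability of $U/R$ and the property of being étale over the base can be verified after an étale covering $S'\to S$: if $U_{S'}/R_{S'}$ is representable by an analytic space $X'$ étale over $S'$, then the base-change compatibility above supplies canonical descent data for $X'$ along $S'\times_S S'$, satisfying the cocycle condition by uniqueness of the representing object, and effectivity of étale descent for analytic spaces \cite[Theorem 4.1.2]{ConradTemkin_algspaces} produces an analytic space $X$ over $S$ with $X\times_S S'\cong X'$; representability and étaleness over $S$ then follow because both can be checked on an étale cover of the target. Using the local structure of quasi-finite étale morphisms together with the splitting of finite étale covers after an étale base change, I may therefore assume, after shrinking and covering $S$, that $U=\coprod_i W_i$ with each $W_i\hookrightarrow S$ an open immersion.

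In this split situation the quotient is visibly an analytic space, which is the heart of the construction. Here $U\times_S U=\coprod_{i,j}(W_i\cap W_j)$, and by the first paragraph $R=\coprod_{i,j}R_{ij}$ with each $R_{ij}\hookrightarrow W_i\cap W_j$ an open immersion. The equivalence-relation axioms translate exactly into gluing data for the open subspaces $W_i\subseteq S$: reflexivity gives $R_{ii}\supseteq\mathrm{diag}$, symmetry identifies $R_{ij}$ with $R_{ji}$, and transitivity is precisely the cocycle condition. Gluing the $W_i$ along the $R_{ij}$ produces an analytic space $X$ (in general non-separated, which is permitted in $\mathbf{An}_k$) equipped with an étale morphism $X\to S$ and an étale surjection $U\to X$ with $R=U\times_X U$; a direct check on $T$-points shows that $X$ represents $U/R$. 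Descending via the previous paragraph then yields the general case.

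The hard part will not be the equivalence-relation bookkeeping or the identification of the split quotient with a glued space, both of which are routine. The genuine obstacle is the passage from this étale-local model to a global object, that is, the effectivity of étale descent for possibly non-separated Berkovich analytic spaces, for which I rely entirely on \cite{ConradTemkin_algspaces}; the secondary technical point requiring care is the étale-local splitting of $U\to S$ into a disjoint union of open immersions, which rests on the local structure theory of quasi-finite étale morphisms in \cite{Berkovich_etalecoho}. If either of these inputs were unavailable one would be forced into the more elaborate bootstrap of \cite[Tag 0264]{stacks-project}, which is exactly what the étale hypothesis on $U\to S$ lets us avoid.
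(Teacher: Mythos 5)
There is a genuine gap, and it sits exactly where you locate the ``heart'' of your argument being safe: the passage back from the étale cover of $S$. You assert that representability of $U/R$ may be verified after an étale covering $S'\rightarrow S$, with effectivity of the resulting descent datum supplied by \cite[Theorem 4.1.2]{ConradTemkin_algspaces}. But that theorem only says that analytic spaces are sheaves for the étale topology, i.e.\ it gives descent for \emph{morphisms}; it does not make descent data for \emph{objects} (possibly non-separated analytic spaces étale over the base) effective. Effectivity in the generality you need is essentially equivalent to the lemma itself: given a descent datum for $X'\rightarrow S'$ étale, the would-be descended object is a priori only the quotient sheaf of an étale equivalence relation over $S$ with étale structure map --- precisely an instance of the statement being proved, so your reduction is circular. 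The paper in fact runs the logic in the opposite direction: it proves this lemma first and then uses it, in the proof of Proposition \ref{prop_etaleequivquot=etaleanalspace}, to establish that exactly such descent data are effective. The scheme-theoretic analogue is a useful warning: étale descent of non-separated objects is not effective in the category of schemes in general (one only obtains an algebraic space), and this failure is the very reason the notion of étale analytic space is introduced here; so the black box you invoke cannot be taken off the shelf, and \cite{ConradTemkin_algspaces} does not contain it.

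By contrast, the paper's proof avoids unrestricted effectivity entirely. It first Zariski-localizes on $U$ (a relative version of \cite[Lemma 4.2.3]{ConradTemkin_algspaces}) to reduce to $U\rightarrow S$ \emph{finite} étale; it then base changes along $U\rightarrow S$ itself, where the relation acquires sections so the quotients $X'=U'/R'$ and $X''=U''/R''$ exist trivially; and the induced equivalence relation $X''\hookrightarrow X'\times_S X'$ has monomorphism a finite monomorphism, hence a closed immersion, being a base change of the finite morphism $U\rightarrow S$. This puts it within reach of the \emph{separated}-quotient theorem \cite[Theorem 1.2.2]{ConradTemkin_algspaces} in relative form, which yields $X$ over $S$; étaleness of $X\rightarrow S$ is then checked along the étale surjection $X'\rightarrow X$. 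So where you need effectivity of étale descent for arbitrary étale objects, the paper needs only the closed-diagonal case that Conrad--Temkin actually prove. Your preliminary observations are fine and not where the problem lies: that $R\hookrightarrow U\times_S U$ is an open immersion is correct (and unused by the paper), the split case $U=\coprod_i W_i$ with its gluing along the $R_{ij}$ is a legitimate replacement for the ``sections'' step, and the étale-local splitting of a finite étale cover is standard; but without a substitute for the descent step --- such as the closed-immersion trick above --- the argument does not close.
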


\begin{proof}
Let $U_i\subseteq U$ be a cover of $U$ by open subsets. Set $R_i=R\times_{U\times U}(U_i\times U_i)=R\cap (U_i\times U_i)$. Then the quotient $U/R$ is representable by an analytic space $X$ \'etale over $S$ if and only if all quotients $U_i/R_i$ are representable by analytic spaces $X_i$ \'etale over $S$. This statement is an immediate generalization of \cite[Lemma 4.2.3]{ConradTemkin_algspaces} and its proof is the same in our situation. Since every point in $U$ has an open neighborhood that is a finite \'etale cover of an open subset of $S$, we can therefore assume that in our claim the \'etale morphism $U\rightarrow S$ is finite \'etale. 

Now consider the base change $U'=U\times_SU$ of $U$ along $U\rightarrow S$. Setting $R'=R\times_U U'$ we obtain an \'etale equivalence relation $R'\hookrightarrow U'\times_S U'$ whose quotient $U'/R'$ is representable by an analytic space $X'$, since the morphisms $R'\rightrightarrows U'$ admit sections. Similarly, we can consider the base change $U''=U'\times_U U'$ as well as the induced \'etale equivalence relation $R''\hookrightarrow U''\times_S U''$ with $R''=R'\times_{U'}U''$. The morphisms $R''\rightrightarrows U''$ again admit sections and therefore the quotient $U''/R''$ is representable by an analytic space $X''$. We have an induced \'etale equivalence relation $X''\hookrightarrow X'\times_SX'$ whose diagonal is a finite monomorphism, as a base change of $U\rightarrow S$, and thus a closed immersion. By a relative version of \cite[Theorem 1.2.2]{ConradTemkin_algspaces} the quotient $X''/X'$ is representable by an analytic space $X$ over $S$. Finally, the morphism $X\rightarrow S$ is \'etale, since $X'\rightarrow X$ is \'etale and surjective and the composition $X'\rightarrow X\rightarrow S$ is \'etale. 
\end{proof}

\begin{proof}[Proof of Proposition \ref{prop_etaleequivquot=etaleanalspace}]
Write $X=U/R$. Let $T\rightarrow X$ be a morphism from an analytic space $T$ to the quotient sheaf $X=U/R$. We have to show that $Z=T\times_X U$ is representable by an analytic space. 

There is an \'etale covering $(T_i\rightarrow T)$ of $T$ such that $X\vert_{T_i}=(U/_{pre}R)\vert_{T_i}$. In this case the morphisms $T_i\rightarrow X$ factor through morphisms $T_i\rightarrow U$ and the morphisms $T_{ij}=T_i\times_T T_i\rightarrow U\times U$ factor through morphisms $T_{ij}\rightarrow R$. In this case we have natural isomorphisms
\begin{equation*}\begin{split}
T_i\times_{T}Z&\simeq T_i\times_T T\times_X U\\
&\simeq T_i\times_{X}U\\
&\simeq T_i\times_U U\times_XU\\
&\simeq T_i\times_U R \ .
\end{split}\end{equation*}
Therefore $T_i\times_TZ$ is representable by an analytic space $S_i$ and the morphisms $S_i\rightarrow T_i$ are \'etale and surjective, since $U\rightarrow X$ is \'etale and surjective. The pullback of the effective descent datum $\big(T_i, \phi_{ij}\mathrel{\mathop:}T_{ij}\xrightarrow{\sim} T_{ji}\big)$ via $Z\rightarrow T$ induces a descent datum over $Z$. This descent datum is effective by Lemma \ref{lemma_quotientsofetalecovers}, since the morphisms $S_i\rightarrow T_i$ are \'etale and surjective. Therefore $Z$ is representable by an analytic space $S$. 

Finally, the morphism $S\rightarrow T$ is \'etale and surjective, since the $S_i\rightarrow T_i$ are \'etale and surjective. Thus $U\rightarrow X$ is \'etale and surjective as well.
\end{proof}

A \emph{presentation} of an \'etale analytic space $X$ is given by an \'etale equivalence relation $R$ on an analytic space $U$ together with an isomorphism $U/R\simeq X$. The following Proposition \ref{prop_atlas=etaleequivalence} shows that every \'etale analytic space has a presentation.

\begin{proposition}\label{prop_atlas=etaleequivalence}
Let $X$ be an \'etale analytic space and $f\mathrel{\mathop:}U\rightarrow X$ be a representable surjective \'etale morphism from an analytic space $U$ onto $X$. Set $R=U\times_X U$. Then the monomorphism $R\hookrightarrow U\times U$ defines an \'etale equivalence relation and the morphism $U\rightarrow X$ induces an isomorphism $U/R\simeq X$. 
\end{proposition}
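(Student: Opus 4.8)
The plan is to verify the two assertions separately: first that $R\hookrightarrow U\times U$ is an \'etale equivalence relation, and then that the morphism $U/R\to X$ induced by $f$ is an isomorphism of sheaves.

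For the first assertion, I would begin by noting that $R=U\times_X U$ is representable by an analytic space: since $f$ is representable, the base change $T\times_X U$ is representable for every analytic space $T$ over $X$, and taking $T=U$ with the structure map $f$ gives exactly $R$. The natural morphism $R\to U\times U$ furnished by the two projections is the base change of the diagonal $\Delta_X\mathrel{\mathop:}X\to X\times X$ along $f\times f\mathrel{\mathop:}U\times U\to X\times X$. Because $X$ is a sheaf of sets, the diagonal $\Delta_X$ is a monomorphism---on $T$-points it is the set-theoretic diagonal $X(T)\to X(T)\times X(T)$---and monomorphisms are stable under base change, so $R\hookrightarrow U\times U$ is a monomorphism. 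Condition (i) of the definition is then immediate: for every analytic space $T$ one has $R(T)=\{(a,b)\in U(T)\times U(T):f(a)=f(b)\}$, the kernel-pair equivalence relation on $U(T)$, which is reflexive, symmetric, and transitive. For condition (ii), each of the two compositions $R\hookrightarrow U\times U\rightrightarrows U$ coincides with a projection $U\times_X U\to U$, which is a base change of $f$ along $f$; since $f$ is \'etale and \'etale morphisms are stable under base change, both projections are \'etale.

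For the isomorphism $U/R\simeq X$, I would observe that $f$ induces a morphism of presheaves $U/_{pre}R\to X$ that is injective on every set of $T$-sections, precisely because $R(T)$ is the kernel pair of $U(T)\to X(T)$. Since sheafification is exact and $X$ is already a sheaf, the induced map $U/R\to X$ is a monomorphism of \'etale sheaves. It then remains to show that this map is an epimorphism, for which it suffices to show that $f\mathrel{\mathop:}U\to X$ is an epimorphism of \'etale sheaves, as $U\to X$ factors through $U/R$. Given a section $s\in X(T)$, corresponding to a map $T\to X$, the base change $V=T\times_X U\to T$ is surjective and \'etale by the hypotheses on $f$, hence an \'etale covering of $T$, and the tautological map $V\to U$ lifts $s\vert_V$ into $U(V)$. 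Thus $f$, and a fortiori $U/R\to X$, is locally surjective, i.e. an epimorphism of sheaves. A morphism in the topos of sheaves on $(\mathbf{An}_k)_{et}$ that is simultaneously a monomorphism and an epimorphism is an isomorphism, which completes the argument.

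The bulk of the proof is formal, and the one step deserving genuine care is the epimorphism claim: I would need to read ``surjective'' for the representable morphism $f$ as the statement that \emph{every} base change $T\times_X U\to T$ is a surjective \'etale morphism of analytic spaces, and then to recognize such a morphism as an \'etale covering in the sense used to define $(\mathbf{An}_k)_{et}$. This is exactly what converts the topological surjectivity of $f$ into the sheaf-theoretic epimorphism property, and it is the only place where the specific Grothendieck topology, rather than pure category theory, enters the argument.
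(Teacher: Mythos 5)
Your proof is correct and follows essentially the same route as the paper's: verify the equivalence-relation and \'etaleness conditions directly (the projections being base changes of the \'etale morphism $f$), then reduce the isomorphism $U/R\simeq X$ to showing that $f$ is an epimorphism of \'etale sheaves, which you obtain exactly as the paper does, from the representable surjective \'etale base change $T\times_X U\rightarrow T$ furnishing an \'etale covering with a tautological lift. The only cosmetic difference is at the final step: where the paper invokes \cite[Tag 086K]{stacks-project} to identify $X$ with the quotient of $U$ by its kernel pair once the epimorphism property is known, you supply a self-contained substitute by checking injectivity of $U/_{pre}R\rightarrow X$ on sections, using exactness of sheafification to get a monomorphism of sheaves, and concluding via the fact that the topos of sheaves on $(\mathbf{An}_k)_{et}$ is balanced.
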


Our proof of Proposition \ref{prop_atlas=etaleequivalence} is simple adaption of \cite[Tag 0262]{stacks-project}.

\begin{proof}[Proof of Proposition \ref{prop_atlas=etaleequivalence}]
For an analytic space $T$ we have 
\begin{equation*}
R(T)=\big\{(a,b)\in U(T)\times U(T)\big\vert f\circ a=f\circ b \big\}
\end{equation*}
and this clearly defines an equivalence relation. The morphisms $R\rightrightarrows U$ are \'etale as base changes of the \'etale morphism $f$.

We are now going to prove $U/R\simeq X$. By \cite[Tag 086K]{stacks-project} we only need to show that $U\rightarrow X$ is an epimorphism of sheaves. Since $U\rightarrow X$ is surjective, the base change $R\rightarrow U$ is surjective as well and this is equivalent to $h_R\rightarrow h_U$ being an epimorphism of \'etale sheaves, since both $U$ and $R$ are analytic spaces. Since $U\rightarrow X$ is an \'etale cover of $X$, this observation already implies the claim.
\end{proof}

\begin{remark}
Suppose that an \'etale analytic space $X$ admits a presentation by an \'etale equivalence relation $R\rightrightarrows U$ such that the diagonal $R\rightarrow U\times U$ is a closed immersion. Then by \cite[Theorem 1.2.2]{ConradTemkin_algspaces} the \'etale analytic space $X=U/R$ is representable by an analytic space. This means that all separated \'etale analytic spaces are already analytic spaces.
\end{remark}


\subsubsection{Analytic groupoids}

\begin{definition}
An \emph{analytic groupoid} is a groupoid object in the category of \'etale analytic spaces, i.e. a septuple $(U,R,s,t,c,i,e)$ consisting of two \'etale analytic spaces $U$ and $R$, as well as
\begin{itemize}
\item a \emph{source morphism} $s\mathrel{\mathop:}R\rightarrow U$,
\item a \emph{target morphism} $t\mathrel{\mathop:}R\rightarrow U$, 
\item a \emph{composition morphism} $c\mathrel{\mathop:}R\times_{s,U,t}R\rightarrow R$, 
\item an \emph{inverse morphism} $i\mathrel{\mathop:}R\rightarrow R$, and
\item a \emph{unit morphism} $e\mathrel{\mathop:}U\rightarrow R$
\end{itemize}
such that for all analytic spaces $T$ over $k$ the septuple $\big(U(T),R(T),s,t,c,i,e\big)$ is a groupoid category.
\end{definition}

Note that the inverse morphism $i$ and the unit morphism $e$ are uniquely determined by $s$, $t$, and $c$. In our notation we are going to suppress the reference to the morphisms $c$, $i$, and $e$ and simply write $(s,t\mathrel{\mathop:}R\rightrightarrows U)$ or $(R\rightrightarrows U)$ for an analytic groupoid $(U,R,s,t,c,i,e)$.

An analytic groupoid $(R\rightrightarrows U)$ gives rise to a presheaf
\begin{equation*}\begin{split}
(\mathbf{An}_k)^{op}&\longrightarrow\mathbf{Groupoids} \\
T&\longmapsto \big(U(T)\rightrightarrows R(T)\big) 
\end{split}\end{equation*} 
which by \cite[Tag 0049]{stacks-project} corresponds to a category fibered in groupoids $[U/_{pre}R]$ over $\mathbf{An}_k$. In fact, the category fibered in groupoids $[U/_{pre}R]$ is pre-stack over $(\mathbf{An}_k)_{et}$.

\begin{definition}
Let $(R\rightrightarrows U)$ be an analytic groupoid. The \emph{quotient stack} $[U/R]$ is defined to be the stackification of the pre-stack $[U/_{pre}R]$.
\end{definition}

Let now $\mathcal{P}$ be a property of morphisms in $\mathbf{An}_k$ that is stable under base change and can be checked on \'etale coverings of the target and the domain. An analytic groupoid $(U,R,s,t,c)$ is said to have property $\mathcal{P}$, if the source and the target morphism $(s,t\mathrel{\mathop:}R\rightrightarrows U)$ both have property $\mathcal{P}$. It is enough to check such properties for one of the two morphisms, since the inverse morphism $i\mathrel{\mathop:}R\rightarrow R$ is an isomorphism. 

\begin{proposition}\label{prop_groupoidquot=analstack}
Let $(R\rightrightarrows U)$ be a $G$-smooth, surjective, and universally submersive analytic groupoid. Then:
\begin{enumerate}[(i)]
\item The quotient stack $\mathcal{X}=[U/R]$ is an analytic stack.  
\item If the groupoid $(R\rightrightarrows U)$ is \'etale, the quotient stack $\calX=[X/R]$ is an analytic Deligne-Mumford stack.
\end{enumerate}
\end{proposition}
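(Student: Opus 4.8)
The proposition is the non-Archimedean analogue of the standard fact that a smooth groupoid presentation yields an algebraic stack, and I would prove it by verifying the two axioms of Definition \ref{def_analstack} directly for $\calX=[U/R]$. The starting point is the groupoid identity
\begin{equation*}
U\times_{\calX}U\simeq R,
\end{equation*}
realized via $r\mapsto(s(r),t(r),r)$. This is immediate on the level of the prestack $[U/_{pre}R]$, where a point of $U\times_{[U/_{pre}R]}U$ over $T$ is a triple $(a,b,\phi)$ with $a,b\in U(T)$ and $\phi\colon a\xrightarrow{\sim}b$, i.e. an element $r\in R(T)$ with $s(r)=a$ and $t(r)=b$. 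Since $\Isom_{[U/_{pre}R]}(a,b)\simeq T\times_{(a,b),U\times U}R$ is already representable, hence already an \'etale sheaf, the stackification leaves it unchanged, so the identity persists for $\calX=[U/R]$ itself. In particular $U\times_\calX U$ is representable by the \'etale analytic space $R$.

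First I would verify axiom (i), that the diagonal is representable by \'etale analytic spaces, using criterion (ii) of Lemma \ref{lemma_diag=rep}: for any analytic space $T$ and any $x,y\in\calX(T)$, the sheaf $\Isom_\calX(x,y)$ must be an \'etale analytic space. Because the atlas $U\to\calX$ is an epimorphism of stacks, there is an \'etale covering $(T_i\to T)$ over which $x$ and $y$ lift to maps $a_i,b_i\colon T_i\to U$; over each $T_i$ the identity $U\times_\calX U\simeq R$ then gives
\begin{equation*}
\Isom_\calX(x,y)\big\vert_{T_i}\simeq T_i\times_{(a_i,b_i),U\times U}R,
\end{equation*}
which is an \'etale analytic space since the category of \'etale analytic spaces is closed under fiber products. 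The groupoid cocycle on the overlaps $T_{ij}$ endows these with a descent datum, and the point is that this datum is effective: gluing \'etale analytic spaces along an \'etale covering again produces an \'etale analytic space, which is exactly the content secured by Proposition \ref{prop_etaleequivquot=etaleanalspace} (and underneath it Lemma \ref{lemma_quotientsofetalecovers}). Thus $\Isom_\calX(x,y)$ is representable by an \'etale analytic space, giving axiom (i).

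Next I would verify axiom (ii). By Lemma \ref{lemma_diag=rep}(iii), axiom (i) already makes the morphism $U\to\calX$ representable by \'etale analytic spaces, so its properties may be tested by base change against analytic spaces $T\to\calX$. \'Etale-locally on $T$ such a map factors as $T'\xrightarrow{g}U\to\calX$, and then $U\times_\calX T'\simeq R\times_{s,U,g}T'$, so the projection to $T'$ is the pullback of $s\colon R\to U$. Since the groupoid is $G$-smooth, surjective, and universally submersive, and these properties are stable under base change, the pullback has the same properties; as all three can be checked on an \'etale covering of the target (per the conventions of the introduction), $U\times_\calX T\to T$ is $G$-smooth, surjective, and universally submersive. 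Because $U$ is only an \'etale analytic space, I would finally choose an analytic-space atlas $U_0\to U$ (surjective \'etale) and use the composite $U_0\to U\to\calX$, which remains $G$-smooth, surjective, and universally submersive, as the atlas required by Definition \ref{def_analstack}; this proves (i) of the proposition. For part (ii), if the groupoid is \'etale then $s$ is \'etale, so the same base-change argument shows $U\to\calX$, and hence $U_0\to\calX$, is \'etale, exhibiting $\calX$ as an analytic Deligne-Mumford stack.

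\emph{Main obstacle.} The delicate step is the diagonal, specifically the effectivity of the descent datum assembling the local pieces $T_i\times_{U\times U}R$ into a single \'etale analytic space; everything rests on knowing that \'etale analytic spaces are closed under fiber products and satisfy \'etale descent, which is precisely why Proposition \ref{prop_etaleequivquot=etaleanalspace} is invoked here as the essential input. Verifying axiom (ii) is then comparatively routine, being a formal base-change computation combined with the descent-of-properties statements recorded in the conventions.
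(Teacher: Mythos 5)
Your proof is correct and follows essentially the same route as the paper's: you first establish $R\simeq U\times_\calX U$ (the paper's Lemma \ref{lemma_groupoidquot=cartesian}), verify representability of the diagonal by computing $\Isom_\calX(x,y)$ \'etale-locally as $T_i\times_{U\times U}R$ and gluing (Lemma \ref{lemma_quotrepdiag}), and check the atlas axiom by identifying $U\times_\calX T_i$ with a base change of the source morphism $s$, exactly as in the paper's proof of Proposition \ref{prop_groupoidquot=analstack}. If anything, you are more explicit than the paper on two points it glosses over: you justify the effectivity of the descent datum for the Isom sheaf via Proposition \ref{prop_etaleequivquot=etaleanalspace} and Lemma \ref{lemma_quotientsofetalecovers} (where the paper only notes that $\Isom_\calX(x,y)$ is an \'etale sheaf), and you refine the \'etale analytic space $U$ to an analytic-space atlas $U_0\rightarrow U$, which Definition \ref{def_analstack} literally requires but the paper's proof leaves implicit.
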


\begin{example} 
Let $G$ be an analytic group acting on an analytic space $X$. Then we have an analytic groupoid  $(G\times X\rightrightarrows X)$ given as follows:
\begin{itemize}
\item the source morphism $s\mathrel{\mathop:}G\times X\rightarrow X$ by $(g,x)\mapsto x$,
\item the target morphism $t\mathrel{\mathop:}G\times X\rightarrow X$ by $(g,x)\mapsto g\cdot x$,
\item the composition morphism $c\mathrel{\mathop:}(G\times X)\times_X (G\times X)\rightarrow (G\times X)$ by $\big((g,x),(g',x')\big)\mapsto (g'g,x)$,
\item the inverse morphism $i\mathrel{\mathop:}G\times X\rightarrow G\times X$ by $(g,x)\mapsto (g^{-1},x)$, and
\item the unit morphism by $e \mathrel{\mathop:}X\rightarrow G\times$ by $x\mapsto (1,x)$.
\end{itemize}
In this case the quotient stack will be denoted by $[X/G]$. By Proposition \ref{prop_groupoidquot=analstack}  the quotient $[X/G]$ is an analytic stack and, if $G$ is finite, it is an analytic Deligne-Mumford stack.
\end{example}

The proof of Proposition \ref{prop_groupoidquot=analstack} below is an adaption of the proof of \cite[Tag 04TK]{stacks-project} to the non-Archimedean analytic situation. 

\begin{lemma}\label{lemma_quotrepdiag}
Let  $(R\rightrightarrows U)$ be an analytic groupoid. Then the diagonal morphism $\Delta_\mathcal{X}\mathrel{\mathop:}\mathcal{X}\rightarrow\mathcal{X}\times\mathcal{X}$ of the quotient stack $\mathcal{X}=[U/R]$ is representable by \'etale analytic spaces.
\end{lemma}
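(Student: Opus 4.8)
The plan is to verify condition (i) of Lemma~\ref{lemma_diag=rep} by establishing the equivalent condition (ii): for every analytic space $T$ and all objects $x,y\in\mathcal{X}(T)$, the presheaf $\Isom_\mathcal{X}(x,y)$ on $\big(\mathbf{An}_k/T\big)_{et}$ is representable by an \'etale analytic space. Since $\mathcal{X}=[U/R]$ is by definition the stackification of the prestack $[U/_{pre}R]$, whose objects over $T$ are precisely the elements of $U(T)$, I would first treat objects coming from $U(T)$ and then pass to arbitrary objects by \'etale descent.

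For the first step, suppose $x,y$ are the images of morphisms $a,b\colon T\to U$ under $U(T)\to\mathcal{X}(T)$. Unwinding the groupoid structure, a morphism $x\to y$ over an object $T'\to T$ is an element $r\in R(T')$ with $s\circ r=a|_{T'}$ and $t\circ r=b|_{T'}$, which yields a canonical identification
\begin{equation*}
\Isom_\mathcal{X}(x,y)\;\cong\;R\times_{(s,t),\,U\times U,\,(a,b)}T,
\end{equation*}
the fibre product formed along $(s,t)\colon R\to U\times U$ and $(a,b)\colon T\to U\times U$. Now $R$ is an \'etale analytic space by definition of an analytic groupoid, and $U\times U$ is an \'etale analytic space as well: if $V\to U$ is an atlas, then $V\times V\to U\times U$ is a representable surjective \'etale morphism from an analytic space. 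Since the class of \'etale analytic spaces is stable under fibre products — verified by pulling back atlases, the fibre product of the atlases furnishing an atlas of the fibre product — it follows that $\Isom_\mathcal{X}(x,y)$ is an \'etale analytic space in this case.

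For arbitrary $x,y\in\mathcal{X}(T)$, the defining property of the stackification provides an \'etale covering $\{T_i\to T\}$ such that each $x|_{T_i}$ and $y|_{T_i}$ is isomorphic to an object coming from $U(T_i)$. By the first step every restriction $\Isom_\mathcal{X}(x,y)|_{T_i}=\Isom_\mathcal{X}(x|_{T_i},y|_{T_i})$ is representable by an \'etale analytic space over $T_i$, and the canonical gluing isomorphisms on the overlaps $T_i\times_T T_j$ are supplied by the sheaf property of $\Isom_\mathcal{X}(x,y)$. To conclude I would choose atlases of these local \'etale analytic spaces, assemble from them a surjective \'etale morphism $V\to\Isom_\mathcal{X}(x,y)$ with $V$ an analytic space, observe that $V\times_{\Isom_\mathcal{X}(x,y)}V$ is then representable by an analytic space and defines an \'etale equivalence relation on $V$, and invoke Proposition~\ref{prop_etaleequivquot=etaleanalspace} to deduce that the quotient $\Isom_\mathcal{X}(x,y)$ is an \'etale analytic space.

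The main obstacle is precisely this final descent step, namely showing that representability by an \'etale analytic space is a local property for the \'etale topology on the base $T$. The delicate point is that the local data over the $T_i$ must be glued into a single sheaf that is globally representable; I would route this through Proposition~\ref{prop_etaleequivquot=etaleanalspace}, presenting $\Isom_\mathcal{X}(x,y)$ as the quotient of an explicit \'etale equivalence relation on an analytic space, with the stability of \'etale analytic spaces under fibre products guaranteeing that all the intervening fibre products (in particular $V\times_{\Isom_\mathcal{X}(x,y)}V$) are again representable.
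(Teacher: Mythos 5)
Your proposal is correct and follows essentially the same route as the paper's proof: reduce via Lemma~\ref{lemma_diag=rep} to representability of $\Isom_\mathcal{X}(x,y)$, pass to an \'etale covering $(T_i\rightarrow T)$ on which $\mathcal{X}$ agrees with the prestack $[U/_{pre}R]$ so that the local Isom sheaf is the fibre product of $(s,t)\mathrel{\mathop:}R\rightarrow U\times U$ against $\big(x\vert_{T_i},y\vert_{T_i}\big)$, and then glue. Your final descent step is a (slightly roundabout but valid) expansion of the paper's terse concluding sentence --- composing the local atlases $V_i\rightarrow \Isom_\mathcal{X}(x,y)\vert_{T_i}\rightarrow \Isom_\mathcal{X}(x,y)$ already yields a representable surjective \'etale atlas, so the detour through $V\times_{\Isom_\mathcal{X}(x,y)}V$ and Proposition~\ref{prop_etaleequivquot=etaleanalspace} is not needed.
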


\begin{proof}
By Lemma \ref{lemma_diag=rep} we only need to show that for an analytic space $T$ and two objects $x,y\in\mathcal{X}(T)$ the sheaf $\Isom_\mathcal{X}(x,y)$ is representable. 
We claim that there is an \'etale covering $(T_i\rightarrow T)$ such that the sheaf $\Isom_\mathcal{X}(x,y)\vert_{T_i}$ is representable by an \'etale analytic space. 
In order to see this we can choose the $T_i$ small enough so that we may assume that $\mathcal{X}\vert_{T_i}=[U/_{pre}R]\vert_{T_i}$ by the universal property of stackification. 
In this case we have a cartesian diagram
\begin{equation*}\begin{CD}
\Isom_\mathcal{X}(x,y)\vert_{T_i}@>>>R\\
@VVV @VV(s,t)V\\
T_i@>\big(x\vert_{T_i},y\vert_{T_i}\big)>> U\times U
\end{CD}\end{equation*} 
and this shows that $\Isom_\mathcal{X}(x,y)\vert_{T_i}$ is representable by an \'etale analytic space. 
Therefore, since $\Isom_\calX(x,y)$ is a sheaf in the \'etale topology, it is representable by an \'etale analytic space itself.
\end{proof}

\begin{lemma}\label{lemma_groupoidquot=cartesian}
Let $(R\rightrightarrows U)$ be an analytic groupoid. Then the natural square
\begin{equation*}\begin{CD}
R@>s>> U\\
@VtVV @VVV\\
U@>>>[U/R]
\end{CD}\end{equation*}
is $2$-cartesian.
\end{lemma}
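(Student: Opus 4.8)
The plan is to prove the square is $2$-cartesian by showing that the canonical morphism $\gamma\colon R\to U\times_{\calX}U$ induced by the target and source morphisms $t,s$, together with the tautological $2$-isomorphism supplied by the groupoid structure, is an isomorphism. First I would record that the target is an \'etale analytic space: by Lemma \ref{lemma_quotrepdiag} the diagonal $\Delta_\calX$ is representable by \'etale analytic spaces, so by Lemma \ref{lemma_diag=rep} every morphism $U\to\calX$ from an analytic space is representable by \'etale analytic spaces and hence $U\times_\calX U$ is representable. Since $R$ and $U\times_\calX U$ are both sheaves on $(\mathbf{An}_k)_{et}$, it then suffices to check that $\gamma$ is a bijection on $T$-points for every analytic space $T$, and this may be verified after passing to an \'etale covering of $T$.

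Next I would unwind the $2$-fiber product: its $T$-points are triples $(a,b,\alpha)$ with $a,b\in U(T)$ and $\alpha$ an isomorphism between the images of $a$ and $b$ in $\calX(T)=[U/R](T)$, i.e. $\alpha\in\Isom_\calX(a,b)(T)$. The morphism $\gamma$ sends $r\in R(T)$ to $\bigl(t(r),s(r),[r]\bigr)$, where $[r]$ is the isomorphism in $[U/R]$ determined by $r$. Proving that $\gamma$ is bijective therefore reduces entirely to computing the sheaf $\Isom_\calX(a,b)$ for objects $a,b$ coming from the atlas $U$.

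The central input is that the stackification morphism $[U/_{pre}R]\to[U/R]$ induces, for any two objects $a,b$ of the prestack over $T$, an isomorphism of \'etale sheaves $\Isom_{[U/_{pre}R]}(a,b)\xrightarrow{\sim}\Isom_{[U/R]}(a,b)$; this is the general fact that stackification of a prestack does not enlarge the Isom sheaves between objects already present, applied using that $[U/_{pre}R]$ is a prestack and the universal property of stackification as recorded in the Stacks Project. By the definition of $[U/_{pre}R]$ from the groupoid, the left-hand sheaf is computed exactly by the cartesian square appearing in the proof of Lemma \ref{lemma_quotrepdiag}, namely $T\times_{(a,b),\,U\times U,\,(s,t)}R$, whose $T'$-points are the $r\in R(T')$ with $s(r)=a|_{T'}$ and $t(r)=b|_{T'}$. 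Combining these identifications, the $T$-points of $U\times_\calX U$ are precisely the data $(a,b,r)$ with $r\in R(T)$, $s(r)=a$, $t(r)=b$, which is the same as $R(T)$ through $\gamma$; hence $\gamma$ is an isomorphism and the square is $2$-cartesian.

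The hard part will be the third step, where one must ensure that passing from the prestack to its stackification does not add new isomorphisms between objects that already live in the prestack. This is exactly where the hypothesis that $[U/_{pre}R]$ is a prestack (and not merely a category fibered in groupoids) is essential, since it is what allows the fiber product $U\times_{(-)}U$ to be computed before stackifying. Once this preservation of Isom sheaves is granted, the remaining identifications are formal consequences of the groupoid axioms and the construction of $[U/_{pre}R]$.
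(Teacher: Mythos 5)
Your proposal is correct and takes essentially the same route as the paper: both arguments reduce the claim to computing $\Isom_{[U/R]}(a,b)$ for $a,b\in U(T)$, identifying it with the fiber product $T\times_{(a,b),\,U\times U,\,(s,t)}R$ coming from the prestack $[U/_{pre}R]$, exactly as in the cartesian diagram of Lemma \ref{lemma_quotrepdiag}. The only cosmetic difference is in how the preservation of Isom sheaves under stackification is justified --- you invoke full faithfulness of the stackification morphism for prestacks, while the paper passes to an \'etale covering $(T_i\rightarrow T)$ on which the objects come from $[U/_{pre}R]$ and then glues the local identifications using that both sides are sheaves; these are the same fact in different packaging.
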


\begin{proof}
Let $T$ be an analytic space and consider two elements $x$ and $y$ in $[U/R](T)$. Choose an \'etale covering $(T_i\rightarrow T)$ of $T$ such that $[U/R]\vert_{T_i}=[U/_{pre}R]\vert_{T_i}$. As above we have again a cartesian diagram
\begin{equation*}\begin{CD}
\Isom_\mathcal{X}(x,y)\vert_{T_i}@>>>R\\
@VVV @VV(s,t)V\\
T_i@>\big(x\vert_{T_i},y\vert_{T_i}\big)>> U\times U
\end{CD}\end{equation*}
and this shows $\Isom_\mathcal{X}(x,y)\vert_{T_i}\simeq (T\times_{U\times U} R)\vert_{T_i}$. Since both sides of this equation are sheaves, we obtain a global isomorphism 
\begin{equation*}
\Isom_\mathcal{X}(x,y)\simeq T\times_{U\times U} R
\end{equation*}
and this immediately implies that the natural functor $R\rightarrow U\times_{[U/R]}U$ is an equivalence. 
\end{proof}

\begin{proof}[Proof of Proposition \ref{prop_groupoidquot=analstack}]
By Lemma \ref{lemma_quotrepdiag} the diagonal morphism of $[U/R]$ is representable by \'etale analytic spaces. We need to check that $U\rightarrow [U/R]$ is $G$-smooth, surjective, and universally submersive. For this let $T\rightarrow[U/R]$ be a morphism from an analytic space $T$ into the quotient stack $[U/R]$. It is enough to check these properties \'etale locally on $T$. So take an \'etale cover $(T_i\rightarrow T)$ of $T$ such that $[U/R]\vert_{T_i}\simeq[U/_{pre}R]\vert_{T_i}$ and we can assume that $T_i\rightarrow\calX$ comes from a morphism $x_i\mathrel{\mathop:}T_i\rightarrow U$. 
In this case, by Lemma \ref{lemma_groupoidquot=cartesian}, there are natural equivalences
\begin{equation*}
U\times_{[U/R]}T_i\simeq (U\times_{[U/R]}U)\times_{s,U,x_i} T_i\simeq R\times_{s,U,x_i} T_i
\end{equation*}
and the projection morphism $R\times_U T_i\rightarrow T_i$ is $G$-smooth, surjective, and universally submersive as a base change of $s\mathrel{\mathop:}R\rightarrow U$.
In the case that $s$ is \'etale, the morphism $U\times_{[U/R]}T\rightarrow T$ is \'etale and $[U/R]$ is an analytic Deligne-Mumford stack. 
\end{proof}

A \emph{groupoid presentation} of an analytic stack $\mathcal{X}$ consists of an analytic groupoid $(R\rightrightarrows U)$ together with an equivalence $[U/R]\simeq\mathcal{X}$. Following the construction presented in \cite[Tag 04T3]{stacks-project} one can show that every analytic stack $\calX$ has a $G$-smooth, surjective, and universally submersive groupoid presentation; it is given by a $G$-smooth, surjective, universally submersive atlas $U$ of $\calX$ and $R=U\times_\calX U$.

\subsection{Analytification}\label{section_analytification}

As explained in \cite[Theorem 3.4.1 and Theorem 3.5.1]{Berkovich_book} and \cite[Proposition 2.6.1]{Berkovich_etalecoho} there is an \emph{analytification functor} 
\begin{equation}\begin{split}\label{eq_analytificationschemes}
(.)^{an}\mathrel{\mathop:}\mathbf{Sch}_{loc.f.t./k}&\longrightarrow \mathbf{An}_k\\
X&\longmapsto X^{an}
\end{split}\end{equation}
from the category of schemes locally of finite type over $k$ into the category of $k$-analytic spaces that respects fiber products and therefore all finite limits. By \cite[Proposition 3.3.11]{Berkovich_etalecoho} algebraic \'etale morphisms on the left side of \eqref{eq_analytificationschemes} induce analytic \'etale morphisms on the right side. Thus $(.)^{an}$ is a continuous functor with respect to the \'etale topologies and it therefore defines a morphism
\begin{equation*}
\alpha\mathrel{\mathop:}(\mathbf{An}_k)_{et}\longrightarrow \big(\mathbf{Sch}_{loc.f.t./k}\big)_{et}
\end{equation*}
from the analytic to the algebraic \'etale site. 

\begin{definition}
Given an algebraic stack $\calX$ locally of finite type over $k$ we define its \emph{associated analytic stack} $\calX^{an}$ as the pullback $\alpha^\ast\calX$ of $\calX$ along $\alpha$ in the sense of \cite[Section II.3.2]{Giraud_cohomologienonabelienne} and \cite[Tag 04WJ]{stacks-project}.
\end{definition}

A priori the pullback $\alpha^\ast\calX$ is only a stack; we will see in Corollary \ref{cor_analytificationisanalytic} below that $\calX^{an}=\alpha^\ast\calX$ is analytic. Let $\calY$ be an analytic stack. Then by the universal property of pullback there is natural equivalence
\begin{equation}\label{eq_upan}
\HOM(\calY,\calX^{an})\simeq\HOM(\alpha_\ast\calY,\calX)
\end{equation}
between the functor categories, where $\alpha_\ast\calY$ is the pushforward of $\calY$ along $\alpha$, i.e. the restriction of $\calY$ to $\mathbf{Sch}_{loc.f.t./k}$ along $\alpha$. For a scheme $X$, locally of finite type over $k$, the pullback $\alpha^\ast X$ is nothing but the \emph{analytic space} $X^{an}$ associated to $X$. So, given an analytic space $Y$, the equivalence \eqref{eq_upan} reduces to a  bijection 
\begin{equation*}
\Hom(Y,X^{an})\simeq\Hom(\alpha_\ast Y,X)=\Hom(Y,X) \ ,
\end{equation*}
i.e. to the universal property of the analytification functor $(.)^{an}$ in \cite[Theorem 3.4.1 and 3.5.1]{Berkovich_book}.

By \cite[Tag 00XS]{stacks-project} taking pullbacks commutes with coequalizers of sheaves and therefore for an \'etale equivalence relation $R$ on an analytic space $U$ there is a natural isomorphism $U^{an}/R^{an}\simeq(U/R)^{an}$. This shows that the analytification of an algebraic space, locally of finite type over $k$, is an \'etale analytic space, since by \cite[Tag 0262]{stacks-project} every algebraic space $X$ has a presentation by an \'etale equivalence relation. 

The following Proposition \ref{prop_analytificationgroupoid} shows that $(.)^{an}$ respects groupoid quotients. 

\begin{proposition}\label{prop_analytificationgroupoid}
Let $\calX$ be an algebraic stack locally of finite type over $k$ and $[U/R]\simeq \calX$ a groupoid presentation of $\calX$ of by algebraic spaces locally of finite type over $k$.
Then there is a natural equivalence
\begin{equation*}
\calX^{an}\simeq[U^{an}/R^{an}] \ .
\end{equation*}
\end{proposition}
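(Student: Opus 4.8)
The plan is to compute the pullback $\calX^{an}=\alpha^\ast[U/R]$ by moving the pullback functor $\alpha^\ast$ successively past the two operations out of which the quotient stack is built: the formation of the quotient prestack of a groupoid, and stackification. Recall that $[U/R]$ is by definition the stackification of the prestack $[U/_{pre}R]$ attached to the internal groupoid $(R\rightrightarrows U)$ in sheaves over $\big(\mathbf{Sch}_{loc.f.t./k}\big)_{et}$. Throughout, $(.)^{st}$ denotes stackification.

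First I would identify the pullback of the prestack. The morphism of sites $\alpha$ is induced by the continuous functor $(.)^{an}$, which respects fiber products, so the associated inverse image functor $\alpha^\ast$ on sheaves is left exact. In particular $\alpha^\ast$ preserves the internal groupoid structure, carrying the source, target, composition, inverse, and unit morphisms of $(R\rightrightarrows U)$ to the corresponding morphisms of $(R^{an}\rightrightarrows U^{an})$. Here I use that $\alpha^\ast$ sends the sheaf represented by an algebraic space $U$ (resp. $R$) to $U^{an}$ (resp. $R^{an}$) — the sheaf-level compatibility already established in the excerpt via \cite[Tag 00XS]{stacks-project} — together with the fact that $(R^{an}\rightrightarrows U^{an})$ is a groupoid in \'etale analytic spaces, since analytifications of algebraic spaces are \'etale analytic spaces and analytification preserves the relevant fiber products. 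Unwinding the definition of the pullback on categories fibered in groupoids \cite[Tag 04WJ]{stacks-project} then yields a natural equivalence $\alpha^\ast[U/_{pre}R]\simeq[U^{an}/_{pre}R^{an}]$.

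The main obstacle is the second step: commuting $\alpha^\ast$ past stackification, i.e. establishing $\alpha^\ast\big([U/_{pre}R]^{st}\big)\simeq\big(\alpha^\ast[U/_{pre}R]\big)^{st}$, which is the $2$-categorical analogue of \cite[Tag 00XS]{stacks-project}. I would deduce this from the adjunction \eqref{eq_upan}: for any prestack $P$ over the algebraic site and any analytic stack $Q$,
\begin{equation*}
\HOM(\alpha^\ast P,Q)\simeq\HOM(P,\alpha_\ast Q)\simeq\HOM(P^{st},\alpha_\ast Q)\simeq\HOM\big(\alpha^\ast(P^{st}),Q\big),
\end{equation*}
where the middle equivalence uses that $\alpha_\ast Q$ is again a stack — restriction along the continuous functor $(.)^{an}$ preserves the sheaf and stack conditions — together with the universal property of stackification. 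Since inverse image functors preserve stacks, $\alpha^\ast(P^{st})$ is itself a stack, and this chain of equivalences exhibits it as the stackification of $\alpha^\ast P$, compatibly with the natural map. The care needed to verify that $\alpha_\ast$ lands in stacks and that $\alpha^\ast$ preserves stacks is where the argument is genuinely technical; everything else is formal.

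Combining the two steps gives
\begin{equation*}
\calX^{an}\simeq\alpha^\ast[U/R]\simeq\big(\alpha^\ast[U/_{pre}R]\big)^{st}\simeq[U^{an}/_{pre}R^{an}]^{st}=[U^{an}/R^{an}],
\end{equation*}
the last equality being the definition of the analytic quotient stack. It remains only to observe that $(R^{an}\rightrightarrows U^{an})$ is $G$-smooth, surjective, and universally submersive, inherited from the corresponding properties of the algebraic presentation under analytification, so that $[U^{an}/R^{an}]$ is an analytic stack by Proposition \ref{prop_groupoidquot=analstack}; this simultaneously reproves that $\calX^{an}$ is analytic.
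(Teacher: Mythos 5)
Your overall architecture --- pull back the quotient prestack, then commute $\alpha^\ast$ past stackification --- is the same as the paper's, and your second step is sound: it is precisely \cite[Tag 04WR]{stacks-project}, which the paper invokes at the corresponding moment. (One small wrinkle: the adjunction you actually use, $\HOM(\alpha^\ast P,Q)\simeq\HOM(P,\alpha_\ast Q)$, is the standard one exhibiting $\alpha^\ast$ as a \emph{left} adjoint, whereas \eqref{eq_upan} as stated in the paper has the opposite variance; your argument needs the standard form.) The genuine gap is in your first step. The pullback of the prestack $[U/_{pre}R]$ along $\alpha$ is computed fiberwise as a colimit over a comma category: over an analytic space $T'$ its objects are triples $(T,\phi\mathrel{\mathop:}T'\rightarrow T^{an},x\mathrel{\mathop:}T\rightarrow U)$ with $T$ a \emph{scheme} locally of finite type over $k$, modulo localization. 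So what appears on objects is the \emph{presheaf} pullback of $U$ (and, on morphisms, of $R$), not the sheaf pullback $U^{an}$ (resp.\ $R^{an}$). When $U$ and $R$ are schemes the two coincide (any $T'\rightarrow U^{an}$ is the image of the triple $(U,\phi,\id_U)$, so the comma category argument closes), but the proposition is stated --- and applied in the proof of Corollary \ref{cor_analytificationisanalytic} --- for $U$ and $R$ algebraic spaces. There a morphism $T'\rightarrow U^{an}$ into the \'etale analytic space $U^{an}$ only \'etale-locally on $T'$ lifts through the analytification of a scheme mapping to $U$, and similarly for $R$ on Hom-sets. Hence ``unwinding the definition'' does \emph{not} give an equivalence of prestacks $\alpha^\ast[U/_{pre}R]\simeq[U^{an}/_{pre}R^{an}]$; at best one obtains a comparison functor that is \'etale-locally fully faithful and \'etale-locally essentially surjective.

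Closing exactly that gap is the substantive content of the paper's proof: it builds the comparison functor explicitly on the pp-category, checks it factors through the localization, and then verifies \'etale-local essential surjectivity and \'etale-local fullness and faithfulness using surjective \'etale morphisms onto $U$ and onto $R$ from schemes locally of finite type over $k$, concluding via \cite[Tag 046N]{stacks-project} and \cite[Tag 04WQ]{stacks-project} that the induced functor of stackifications is an equivalence. Your proposal silently assumes the prestack-level statement that this machinery exists to prove. To repair it, either restrict your step 1 to presentations by schemes and bootstrap to algebraic spaces, or insert the \'etale-local comparison argument; since the equivalence in any case only holds after stackification, your step 2 then finishes the proof as written.
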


Using Proposition \ref{prop_analytificationgroupoid} we can show that the analytification of an algebraic stack locally of finite type over $k$ is an analytic stack. 

\begin{corollary}\label{cor_analytificationisanalytic}
Let $\calX$ be an algebraic stack that is locally of finite type over $k$. Then the stack $\calX^{an}$ is analytic. Moreover, if $\calX$ is a Deligne-Mumford stack, then $\calX^{an}$ is an analytic Deligne-Mumford stack. 
\end{corollary}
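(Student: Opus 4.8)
The plan is to reduce the statement to Proposition \ref{prop_groupoidquot=analstack} by way of Proposition \ref{prop_analytificationgroupoid}. Since $\calX$ is an algebraic stack locally of finite type over $k$, it admits a smooth surjective atlas $U\to\calX$ from a scheme locally of finite type over $k$, and setting $R=U\times_\calX U$ produces an algebraic space locally of finite type over $k$ together with a groupoid presentation $[U/R]\simeq\calX$ whose source and target morphisms $s,t\colon R\rightrightarrows U$ are smooth and surjective, being base changes of $U\to\calX$. By Proposition \ref{prop_analytificationgroupoid} there is a natural equivalence $\calX^{an}\simeq[U^{an}/R^{an}]$, and since the analytification of an algebraic space locally of finite type over $k$ is an \'etale analytic space, the pair $(R^{an}\rightrightarrows U^{an})$ is an analytic groupoid. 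It therefore suffices to verify that this analytic groupoid is $G$-smooth, surjective, and universally submersive; Proposition \ref{prop_groupoidquot=analstack}(i) then yields that $\calX^{an}$ is analytic. For the Deligne-Mumford case one chooses the atlas $U\to\calX$ to be \'etale, so that $s,t$ are \'etale; since analytification preserves \'etaleness by \cite[Proposition 3.3.11]{Berkovich_etalecoho}, the groupoid $(R^{an}\rightrightarrows U^{an})$ is \'etale and Proposition \ref{prop_groupoidquot=analstack}(ii) applies.

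The heart of the argument is thus the claim that the analytification of a smooth surjective morphism $s\colon R\to U$ of algebraic spaces locally of finite type over $k$ is $G$-smooth, surjective, and universally submersive (and by the remark preceding Proposition \ref{prop_groupoidquot=analstack} it is enough to treat $s$, since the inverse is an isomorphism). As all three properties may be checked on an \'etale covering of the domain, I would exploit the algebraic structure theorem for smooth morphisms: \'etale locally on $R$ the morphism $s$ factors as an \'etale morphism $R'\to\A^n_U$ followed by the projection $\A^n_U\to U$. Because $(.)^{an}$ commutes with fiber products and carries algebraic \'etale morphisms to analytic \'etale morphisms, this factorization analytifies, \'etale locally, to a factorization of $s^{an}$ as an analytic \'etale morphism followed by the relative affine-space projection $\A^{n,an}_{U^{an}}\to U^{an}$. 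The projection is $G$-smooth of relative dimension $n$, surjective, and open, hence universally submersive, while analytic \'etale morphisms are $G$-\'etale and open by \cite[Proposition 3.27]{Berkovich_etalecoho}. Combining these building blocks, using stability under composition and the fact that $G$-smoothness, surjectivity, and universal submersiveness descend along \'etale covers of the domain, yields the three required properties for $s^{an}$.

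The main obstacle is the universal submersiveness. As the paper stresses, flatness in $\mathbf{An}_k$ does not imply openness (analytic domains are flat but not open), so submersiveness cannot be extracted from $G$-smoothness by a naive flat-plus-finite-presentation argument as in the scheme-theoretic setting. The factorization through affine space is precisely what circumvents this, isolating the two genuinely universally open building blocks, and a surjective universally open morphism is universally submersive. A secondary point requiring care is the surjectivity of $s^{an}$: this is the standard fact that a surjective morphism of finite-type algebraic spaces analytifies to a surjective map, which I would check fiberwise by noting that for $x\in U^{an}$ lying over $u\in U$ the fiber $R_u\times_{\kappa(u)}\mathcal{H}(x)$ is a nonempty finite-type scheme over the valued field $\mathcal{H}(x)$ and hence carries a point producing a preimage of $x$ in $R^{an}$.
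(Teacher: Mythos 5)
Your proposal is correct and takes essentially the same route as the paper: analytify a smooth surjective groupoid presentation $[U/R]\simeq\calX$ by algebraic spaces locally of finite type over $k$, invoke Proposition \ref{prop_analytificationgroupoid} to get $\calX^{an}\simeq[U^{an}/R^{an}]$, verify that $(R^{an}\rightrightarrows U^{an})$ is $G$-smooth, surjective, and universally submersive (\'etale in the Deligne-Mumford case, via \cite[Proposition 3.3.11]{Berkovich_etalecoho}), and conclude by Proposition \ref{prop_groupoidquot=analstack}. The only difference is that where the paper directly cites Berkovich for the analytification of smooth morphisms being smooth, stable under base change, open, and surjective (\cite[Proposition 3.5.8, Proposition 3.5.2, Corollary 3.7.4]{Berkovich_etalecoho} and \cite[Proposition 3.4.6]{Berkovich_book}), you re-derive these facts from the local factorization of a smooth morphism through the projection $\A^n_U\rightarrow U$ and the fiberwise nonemptiness argument --- which is precisely Berkovich's definition of smoothness and the content of the cited results, so the substance is identical.
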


\begin{proof}
By \cite[Tag 04T3]{stacks-project} every algebraic stack locally of finite type over $k$ has a smooth and surjective groupoid presentation $[U/R]\simeq\calX$ in the category of algebraic spaces that are locally of finite type over $k$. By \cite[Proposition 3.4.6]{Berkovich_book} $(R^{an}\rightrightarrows U^{an})$ is surjective and by \cite[Proposition 3.5.8]{Berkovich_etalecoho} smooth in the sense of \cite[Section 3.5]{Berkovich_etalecoho}. Smooth analytic morphisms are stable under base change by \cite[Proposition 3.5.2]{Berkovich_etalecoho} and open by \cite[Corollary 3.7.4]{Berkovich_etalecoho}. Therefore both $s^{an}$ and $t^{an}$ are universally submersive. Moreover, smooth morphisms are $G$-smooth and therefore $\calX^{an}$ is analytic by Proposition \ref{prop_groupoidquot=analstack} (i).

If $\calX$ is a Deligne-Mumford stack, we can find an \'etale and surjective groupoid presentation $[U/R]\simeq \calX$ of $\calX$ by algebraic spaces locally of finite type over $k$. In this case the induced analytic groupoid $(R^{an}\rightrightarrows U^{an})$ is \'etale by \cite[Proposition 3.3.11]{Berkovich_etalecoho} and surjective by \cite[Proposition 3.4.6]{Berkovich_book}. Therefore $\calX^{an}=[U^{an}/R^{an}]$ is an analytic Deligne-Mumford stack by Proposition \ref{prop_groupoidquot=analstack} (ii).
\end{proof}

The general properties of pullbacks (see \cite[Section II.3.2]{Giraud_cohomologienonabelienne} and \cite[Tag 04WJ]{stacks-project}) ensure that there is an \emph{analytification pseudofunctor} 
\begin{equation*}\begin{split}
(.)^{an}\mathrel{\mathop:}\mathbf{Alg.Stacks}_{loc.f.t./k}&\longrightarrow \mathbf{An.Stacks}_k \\
\calX&\longmapsto \calX^{an}
\end{split}\end{equation*} 
that restricts to the usual analytification functor on the full subcategory of schemes locally of finite type over $k$. This functor is unique up to equivalence.

\begin{example}
Let $G$ be an algebraic group acting on a scheme $X$ that is locally of finite type over $k$. Then the analytification $[X/G]^{an}$ of the quotient stack $[X/G]$ is given by $[X^{an}/G^{an}]$.
\end{example}

The rest of this section is devoted to the proof of Proposition \ref{prop_analytificationgroupoid}.

\begin{proof}[Proof of Proposition \ref{prop_analytificationgroupoid}]
This proof follows ideas of the proof of \cite[Tag 04WX]{stacks-project}. Let us first recall the construction of $\alpha^\ast \calX$ in our situation. 
Consider the category $\calX^{an,pp}$ over $\mathbf{An}_k$ defined as follows:
\begin{itemize}
\item An object of $\calX^{an,pp}$ is a triple $(T,\phi\mathrel{\mathop:}T'\rightarrow T^{an},x)$, where $T$ is an object of $\mathbf{Sch}_{loc.f.t./k}$, the arrow $\phi$ is a morphism in $\mathbf{An}_k$ and $x\mathrel{\mathop:}T\rightarrow U$ is morphism of schemes. 
\item A morphism 
\begin{equation*}
(a,a',\gamma)\mathrel{\mathop:}(T_1,\phi_1\mathrel{\mathop:}T'_1\rightarrow T_1^{an},x_1)\longrightarrow (T_2,\phi\mathrel{\mathop:}T'_2\rightarrow T_2^{an},x_2)
\end{equation*}
consists of a morphism $a\mathrel{\mathop:}T_1\rightarrow T_2$ and a morphism $a'\mathrel{\mathop:}T'_1\rightarrow T'_2$ in $\mathbf{An}_k$ such that the diagram
\begin{equation*}\begin{CD}
T'_1@>a'>> T'_2\\
@VVV @VVV\\
T_1^{an}@>a^{an}>> T_2^{an}
\end{CD}\end{equation*}
commutes, as well as a morphism $\gamma\mathrel{\mathop:}T_1\rightarrow R$ such that the diagram
\begin{center}\begin{tikzpicture}
  \matrix (m) [matrix of math nodes,row sep=3em,column sep=4em,minimum width=2em]
  { & U\\
     T_1 & R \\
     T_2& U \\};
  \path[-stealth]
    (m-2-1) edge node [left] {$a$} (m-3-1)
            edge node [above] {$\gamma$} (m-2-2)
             edge node [above] {$x_1$} (m-1-2)
    (m-3-1) edge node [above] {$x_2$} (m-3-2)
    (m-2-2) edge node [right] {$s$} (m-1-2)
    (m-2-2) edge node [right] {$t$} (m-3-2);
\end{tikzpicture}\end{center}
commutes.
\item The functor $\calX^{an,pp}\rightarrow \mathbf{An}_k$ is given by 
\begin{equation*}
(T,\phi\mathrel{\mathop:}T'\rightarrow T^{an},x)\longmapsto T' \ .
\end{equation*}
\end{itemize}
Now let $S$ denotes the set of arrows in $\calX^{an,pp}$ of the from 
\begin{equation*}
(a,\id_{T'},\gamma)\mathrel{\mathop:}(T_1,\phi_1\mathrel{\mathop:}T'\rightarrow T_1^{an},x_1)\longrightarrow (T_2,\phi_2\mathrel{\mathop:}T'\rightarrow T_2^{an},x_2)
\end{equation*}
such that $\gamma$ is strongly cartesian as a morphism in $[U/_{pre}R]$ over $\mathbf{Sch}_{loc.f.t./k}$. By \cite[Tag 04WF]{stacks-project} the set $S$ is right-multiplicative and by \cite[Tag 04WG and Tag 04WH]{stacks-project} the localization $\calX^{an,p}=S^{-1}\calX^{an}$ is a category fibered in groupoids over $\mathbf{An}_k$. As defined in \cite[Tag 04WJ]{stacks-project} the analytification $\calX^{an}$ is the stackification of $\calX^{an,p}$. 

Having developed this terminology we can now prove our claim. Define a functor $[U/_{pre}R]^{an,pp}\rightarrow [U^{an}/_{pre}R^{an}]$ by
\begin{equation*}
(T,\phi\mathrel{\mathop:}T'\rightarrow T^{an},x)\longmapsto (x'=x\circ\phi \mathrel{\mathop:}T'\rightarrow U^{an})
\end{equation*}
on objects and 
\begin{equation*}
(a,a',\gamma)\longmapsto \big((a'\mathrel{\mathop:}T'_1\rightarrow T'_2),(\gamma\circ\phi_1\mathrel{\mathop:} T_1'\rightarrow R^{an})\big)
\end{equation*}
on morphisms. Since $(R^{an}\rightrightarrows U^{an})$ is a groupoid in \'etale analytic spaces, this functor sends morphisms in $R$ to isomorphisms and therefore it canonically factors through a functor
\begin{equation*}
[U/_{pre}R]^{an,p}\rightarrow [U^{an}/_{pre} R^{an}] \ .
\end{equation*}
By \cite[Tag 04WR]{stacks-project} taking pullbacks commutes with stackification and so we obtain a natural functor 
\begin{equation*}
\calX^{an}\longrightarrow [U^{an}/R^{an}]  
\end{equation*}
by the universal property of stackification.

Finally we need to prove that this functor is an equivalence; by \cite[Tag 046N]{stacks-project} it is enough to show that it is fully faithful and \'etale locally essentially surjective. The latter assertion immediately follows from $U$ admitting a surjective \'etale morphism from a scheme locally of finite type over $k$. Since $R$ also admits a surjective \'etale morphism from a scheme locally of finite type over $k$, the above functor is \'etale locally full. Moreover, for an analytic space $T'$ the images of two morphisms in $[X/_{pre}R]^{an,pp}(T')$ agree in $[U^{an}/_{pre}R^{an}]$ if and only if they differ by an element of $R(T)$. These two observations are enough to show that the above functor is full and faithful by \cite[Tag 04WQ]{stacks-project}. 
 \end{proof}

\begin{remarks}\begin{enumerate}[(i)]
\item Given a presentation $[U/R]\simeq\calX$ of an algebraic stack $\calX$ locally of finite type over $k$ by algebraic spaces locally of finite type over $k$, one could directly define $\calX^{an}$ as the groupoid quotient $[U^{an}/R^{an}]$ and show that this definition gives rise to a well-defined object. 
\item Let $X$ be a separated algebraic space locally of finite type over $k$. In \cite[Theorem 1.2.1]{ConradTemkin_algspaces} the authors show that the analytification $X^{an}$ of $X$, which is a priori only an \'etale analytic space, is representable by an analytic space. 
\end{enumerate}\end{remarks}


\section{Topology of analytic stacks}\label{section_topology}

In this section we are going to define and study the functor
\begin{equation*}
\vert.\vert\mathrel{\mathop:}\mathbf{An.Stacks}_k\longrightarrow \mathbf{Top}
\end{equation*} 
that associates to an analytic stack its underlying topological space. Many results in this section are analogues of the corresponding results in the algebraic setting, as developed e.g. in \cite[Tag 04XE]{stacks-project}. 


\subsection{Points of analytic stacks}\label{section_ptsanstacks} Throughout this section we fix an analytic stack $\calX$. Consider pairs $(K,p)$ consisting of a non-Archimedean field extension $K$ of $k$ and a morphism $p\mathrel{\mathop:}\mathcal{M}(K)\rightarrow\mathcal{X}$ over $k$. Two such pairs $(K,p)$ and $(L,q)$ are said to be \emph{equivalent}, if there is a non-Archimedean field extension $\Omega$ of both $K$ and $L$ making the diagram 
\begin{equation*}\begin{CD}
\mathcal{M}(\Omega)@>>>\mathcal{M}(L)\\
@VVV @VVqV\\
\mathcal{M}(K)@>>p>\mathcal{X}
\end{CD}\end{equation*}
$2$-commutative. An argument analogous to the one in \cite[Tag 04XF]{stacks-project} shows that this notion defines an equivalence relation. 

\begin{definition}
The set of \emph{points} $\vert\calX\vert$ of $\mathcal{X}$ is the set of equivalence classes of pairs $(K,p)$ as above. 
\end{definition}

If $\mathcal{X}$ is represented by an analytic space $X$ the set $\vert \mathcal{X}\vert$ recovers exactly the set $\vert X\vert$ underlying $X$. A morphism $f\mathrel{\mathop:}\mathcal{X}\rightarrow\mathcal{Y}$ of analytic stacks induces a well-defined map $\vert f\vert\mathrel{\mathop:}\vert\mathcal{X}\vert\rightarrow\vert\mathcal{Y}\vert$ that is given by sending a representative $(K,p)$ of a point in $\vert\mathcal{X}\vert$ to the composition $(K,f\circ p)$. Moreover, the association $f\mapsto\vert f\vert$ is functorial. Note, in particular, that, given a $2$-commutative square
\begin{equation*}\begin{CD}
\mathcal{W}@>>>\mathcal{X}\\
@VVV @VVV\\
\mathcal{Y}@>>>\mathcal{Z}
\end{CD}\end{equation*}
of analytic stacks, the induced diagram
\begin{equation*}\begin{CD}
\vert\mathcal{W}\vert @>>>\vert\mathcal{X}\vert\\
@VVV @VVV\\
\vert\mathcal{Y}\vert @>>>\vert\mathcal{Z}\vert
\end{CD}\end{equation*}
is commutative in the category of sets.  

\begin{lemma}\label{lemma_morphismstopspace}
\begin{enumerate}[(i)]
\item An equivalence $\mathcal{X}\rightarrow\mathcal{Y}$ of analytic stacks induces a natural bijection $\vert \mathcal{X}\vert\xrightarrow{\sim}\vert \mathcal{Y}\vert$.
\item Let $\mathcal{X}\rightarrow\mathcal{Z}$ and $\mathcal{Y}\rightarrow\mathcal{Z}$ be morphisms of analytic stacks. Then the induced map $\vert \mathcal{X}\times_\mathcal{Z}\mathcal{Y}\vert\rightarrow\vert\mathcal{X}\vert\times_{\vert\mathcal{Z}\vert}\vert\mathcal{Y}\vert$ is surjective.
\item Let $f\mathrel{\mathop:}\mathcal{X}\rightarrow\mathcal{Y}$ be a morphism of analytic stacks that is representable by \'etale analytic spaces. Then $f$ is surjective if and only if $\vert f\vert\mathrel{\mathop:}\vert\mathcal{X}\vert\rightarrow\vert\mathcal{Y}\vert$ is surjective.  
\end{enumerate}
\end{lemma}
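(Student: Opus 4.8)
The plan is to handle the three assertions in turn, relying on two standard facts about Berkovich spaces. First, any two non-Archimedean field extensions $K$ and $L$ of $k$ admit a common non-Archimedean field extension $\Omega$: the Banach $k$-algebra $K\widehat\otimes_k L$ is nonzero, so its spectrum is nonempty and any point furnishes a field $\Omega=\calH(\cdot)$ receiving both $K$ and $L$. Second, every point $s$ of an analytic space $S$ is the image of a canonical morphism $\calM(\calH(s))\to S$, so a nonempty (étale) analytic space always carries a point representable in this way, and surjectivity of a morphism of such spaces may be checked on underlying point sets (passing to an atlas when needed, using the remark that $\vert X\vert$ recovers the usual underlying set of an analytic space $X$). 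Throughout I freely use the functoriality of $\vert\cdot\vert$ and its compatibility with $2$-commutative squares recorded before the statement. For (i), the first step is to observe that $2$-isomorphic morphisms induce the same map on points: if $\alpha\colon f\Rightarrow f'$ is a $2$-isomorphism between $f,f'\colon\calX\to\calY$, then for a representative $(K,p)$ the compositions $f\circ p$ and $f'\circ p$ are $2$-isomorphic and hence define the same class in $\vert\calY\vert$ (take $\Omega=K$ with the identity extension and use $\alpha$). Granting this, an equivalence $g$ with quasi-inverse $h$ satisfies $\vert h\vert\circ\vert g\vert=\vert h\circ g\vert=\vert\id_\calX\vert=\id_{\vert\calX\vert}$ and symmetrically, so $\vert g\vert$ is a bijection.

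For (ii), I would start with a point of $\vert\calX\vert\times_{\vert\calZ\vert}\vert\calY\vert$, i.e. classes $x\in\vert\calX\vert$ and $y\in\vert\calY\vert$ with equal image in $\vert\calZ\vert$, represented by $(K,p)$ and $(L,q)$. By definition of the equivalence relation the agreement of images provides a common extension $\Omega\supseteq K,L$ together with a $2$-isomorphism between the two induced maps $\calM(\Omega)\to\calZ$. Base-changing $p$ and $q$ along $\calM(\Omega)\to\calM(K)$ and $\calM(\Omega)\to\calM(L)$ and feeding this $2$-isomorphism into the universal property of the $2$-fibre product produces a morphism $\calM(\Omega)\to\calX\times_\calZ\calY$, whose class maps to $(x,y)$. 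This yields the asserted surjectivity.

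For (iii), the forward implication is the easier one: if $f$ is surjective, then for a class $y\in\vert\calY\vert$ represented by $q\colon\calM(K)\to\calY$ the base change $\calX\times_\calY\calM(K)\to\calM(K)$ is surjective onto a one-point space, hence its source is a nonempty étale analytic space; choosing a point $\calM(\Omega)\to\calX\times_\calY\calM(K)$ and composing to $\calX$ gives a class $x$ with $\vert f\vert(x)=y$. The converse requires unwinding the definition of surjectivity for a representable morphism: I must show that for every analytic space $T\to\calY$ the base change $\calX_T=\calX\times_\calY T\to T$ is surjective, and since $\calX_T$ is an étale analytic space over $T$ it suffices to hit every point $t\in\vert T\vert$. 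Representing $t$ by $\calM(\calH(t))\to T$, its image $y\in\vert\calY\vert$ is hit by some $x=(K,p)\in\vert\calX\vert$; the resulting equivalence supplies a common extension $\Omega\supseteq K,\calH(t)$ and a $2$-isomorphism in $\calY$, and the universal property of the fibre product again produces $\calM(\Omega)\to\calX_T$ whose image in $T$ is $t$.

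I expect the main obstacle to be the bookkeeping in part (ii) and in the converse of (iii): in both one must pass to a common field extension, transport the two representatives to it, and then invoke the universal property of the $2$-fibre product with the correct $2$-isomorphism datum. Keeping track of the $2$-morphisms, rather than merely the objects, is the only genuinely delicate point; the remaining verifications are the formal identities for $\vert\cdot\vert$ established before the statement.
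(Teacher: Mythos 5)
Your proposal is correct and follows essentially the same route as the paper: part (i) via invariance of $\vert\cdot\vert$ under $2$-isomorphism, part (ii) via a common non-Archimedean extension $\Omega$ fed into the universal property of the $2$-fibre product, and part (iii) by base-changing to $\calM(K)$ and extracting field-valued points through an atlas. The only cosmetic difference is in the direction ``$\vert f\vert$ surjective implies $f$ surjective,'' where the paper argues globally via the factorization $\vert S\vert\rightarrow\vert\calX\times_\calY T\vert\rightarrow\vert\calX\vert\times_{\vert\calY\vert}\vert T\vert\rightarrow\vert T\vert$ and part (ii), while you run the same argument pointwise over each $t\in\vert T\vert$.
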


Our proof of Lemma \ref{lemma_morphismstopspace} is a simple adaptation of the proofs of the corresponding statements in \cite[Tag 04XE]{stacks-project} and \cite[Tag 0500]{stacks-project}.

\begin{proof}[Proof of Lemma \ref{lemma_morphismstopspace}]
Part (i) immediately follows from the above reasoning, since two naturally equivalent morphisms induce the same morphism on the underlying topological spaces. 

The proof of part (ii) is word-by-word the same as the proof of \cite[Tag 04XH]{stacks-project}. Let $K$ and $L$ be two non-Archimedean extensions of $k$ and consider two morphisms $\mathcal{M}(K)\rightarrow\mathcal{X}$ and $\mathcal{M}(L)\rightarrow\mathcal{Y}$, whose compositions $\mathcal{M}(K)\rightarrow\mathcal{X}\rightarrow\mathcal{Z}$ and $\mathcal{M}(L)\rightarrow\mathcal{Y}\rightarrow\mathcal{Z}$ are equal as elements of $\vert\calZ\vert$. Then there is a common non-Archimedean extension $\Omega$ of both $K$ and $L$ such that $\calM(\Omega)\rightarrow \calZ$ and $\calM(\Omega)\rightarrow \calZ$ are $2$-isomorphic. But this is exactly the datum of a morphism $\mathcal{M}(\Omega)\rightarrow\mathcal{X}\times_\mathcal{Z}\mathcal{Y}$.

For part (iii) suppose first that $\vert f\vert\mathrel{\mathop:}\vert\mathcal{X}\vert\rightarrow\vert\mathcal{Y}\vert$ is surjective. Let $T\rightarrow\mathcal{Y}$ be a morphism from an analytic space $T$ to $\mathcal{Y}$ and $S\rightarrow \calX\times_\calY T$ a surjective morphism from an analytic space $S$ onto $\calX\times_\calY T$. Then the map $\vert S\vert\rightarrow\vert T\vert$ factors as $\vert S\vert\rightarrow\vert\mathcal{X}\times_\mathcal{Y}T\vert\rightarrow\vert\mathcal{X}\vert\times_{\vert\mathcal{Y}\vert}\vert T\vert\rightarrow\vert T\vert$ and is therefore surjective by part (ii). 

Conversely assume that $f\mathrel{\mathop:}\mathcal{X}\rightarrow\mathcal{Y}$ is surjective. Then, given a pair $(K,p)$ consisting of a non-Archimedean extension $K$ of $k$ and a morphism $p\mathrel{\mathop:}\mathcal{M}(K)\rightarrow\mathcal{Y}$, the induced morphism $\mathcal{X}\times_\mathcal{Y}\mathcal{M}(K)\rightarrow\mathcal{M}(K)$ is surjective as a morphism of \'etale analytic spaces. Let $S\rightarrow \calX\times_\calY\calM(K)$ be a surjective morphism from an analytic space $S$. Since $S\rightarrow \calM(K)$ is surjective, we can find a pair $(K',p')$ consisting of a non-Archimedean extension $K'$ of $K$ and a morphism $p'\mathrel{\mathop:}\calM(K')\rightarrow S$ such that the induced composition 
\begin{equation*}
\mathcal{M}(K')\longrightarrow S\longrightarrow \mathcal{X}\times_\mathcal{Y}\mathcal{M}(K)\longrightarrow\mathcal{M}(K)
\end{equation*}
is the morphism induced by $K\hookrightarrow K'$. This proves that $\vert f\vert$ is surjective.
 
 \end{proof}

\begin{definition}
Let $\mathcal{X}$ be an analytic stack and choose a surjective universally submersive morphism $U\rightarrow\mathcal{X}$ from an analytic space $U$ onto $\mathcal{X}$. The set $\vert\mathcal{X}\vert$ endowed with the quotient topology induced via $\vert U\vert\rightarrow\vert \mathcal{X}\vert$ is called the \emph{topological space underlying} $\mathcal{X}$. 
\end{definition}

The topology on $\vert\mathcal{X}\vert$ does not depend on the choice of an atlas $U\rightarrow\mathcal{X}$ by the following Proposition \ref{prop_topanalstack}. 

\begin{proposition}\label{prop_topanalstack} Let $\mathcal{X}$ be an analytic stack.
\begin{enumerate}[(i)] 
\item For every universally submersive surjective morphism $U'\rightarrow\mathcal{X}$ from an analytic space $U'$ onto $\mathcal{X}$ the induced surjective map $\vert U'\vert\rightarrow\vert\mathcal{X}\vert$ is a topological quotient map. If $U'\rightarrow \mathcal{X}$ is \'etale, the quotient map $\vert U'\vert\rightarrow\vert\mathcal{X}\vert$ is open.
\item Let $[U/R]\simeq\mathcal{X}$ be a groupoid presentation of an analytic stack $\mathcal{X}$. Then the image of $\vert R\vert\rightrightarrows\vert U\vert\times\vert U\vert$ defines an equivalence relation on $\vert U\vert$ and $\vert\mathcal{X}\vert$ is the topological quotient of $\vert U\vert$ by this equivalence relation. 
\item For every morphism $f\mathrel{\mathop:}\mathcal{X}\rightarrow\mathcal{Y}$ of analytic stacks the induced map $\vert f\vert\mathrel{\mathop:}\vert\mathcal{X}\vert\rightarrow\vert\mathcal{Y}\vert$ is continuous.  
\end{enumerate}\end{proposition}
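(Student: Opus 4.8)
The plan is to prove (i) first and then read off (ii) and (iii) from it. Fix the atlas $U\rightarrow\calX$ used to define the topology on $\vert\calX\vert$. Since the diagonal of $\calX$ is representable by \'etale analytic spaces, Lemma \ref{lemma_diag=rep} shows that every morphism from an analytic space into $\calX$ is representable by \'etale analytic spaces; in particular, for a surjective universally submersive $U'\rightarrow\calX$ the map $\vert U'\vert\rightarrow\vert\calX\vert$ is surjective by Lemma \ref{lemma_morphismstopspace} (iii). To see it is a topological quotient map I would form the fiber product $V=U'\times_\calX U$, which is representable by an \'etale analytic space, and choose an analytic-space atlas $W\rightarrow V$. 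Both projections $V\rightarrow U'$ and $V\rightarrow U$ are base changes of surjective universally submersive morphisms and hence share these properties, so precomposing with the surjective \'etale morphism $W\rightarrow V$ makes $\vert W\vert\rightarrow\vert U'\vert$ and $\vert W\vert\rightarrow\vert U\vert$ into topological quotient maps. A routine diagram chase through the commutative square relating these maps to the defining quotient $\vert U\vert\rightarrow\vert\calX\vert$ then shows that a subset of $\vert\calX\vert$ is open precisely when its preimage in $\vert U'\vert$ is open; applying this to two atlases at once yields the independence of the topology asserted before the Proposition.

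For the openness clause of (i), if $U'\rightarrow\calX$ is \'etale then $V\rightarrow U$ is \'etale, hence $\vert W\vert\rightarrow\vert U\vert$ is an \emph{open} map. Using the surjectivity of $\vert V\vert\rightarrow\vert U'\vert\times_{\vert\calX\vert}\vert U\vert$ from Lemma \ref{lemma_morphismstopspace} (ii), one identifies, for an open $O\subseteq\vert U'\vert$, the preimage in $\vert U\vert$ of the image of $O$ in $\vert\calX\vert$ with the image under $\vert W\vert\rightarrow\vert U\vert$ of the preimage of $O$ in $\vert W\vert$; the latter is open, so the image of $O$ is open and $\vert U'\vert\rightarrow\vert\calX\vert$ is open. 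Part (ii) is then immediate: by Lemma \ref{lemma_groupoidquot=cartesian} we have $R\simeq U\times_\calX U$ with $s,t$ the two projections, so the $2$-cartesian square forces the composites $\vert R\vert\rightrightarrows\vert U\vert\rightarrow\vert\calX\vert$ to agree and hence the image of $(\vert s\vert,\vert t\vert)$ to lie in the relation \emph{same image in} $\vert\calX\vert$, while Lemma \ref{lemma_morphismstopspace} (ii) makes $\vert R\vert\rightarrow\vert U\vert\times_{\vert\calX\vert}\vert U\vert$ surjective, so the image is exactly this (evidently equivalence) relation; since $\vert U\vert\rightarrow\vert\calX\vert$ is a quotient map by (i), $\vert\calX\vert$ is the topological quotient of $\vert U\vert$ by it.

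For (iii), choose atlases $U\rightarrow\calX$ and $V\rightarrow\calY$. Because $\vert U\vert\rightarrow\vert\calX\vert$ is a quotient map and $\vert g\vert=\vert f\vert\circ(\vert U\vert\rightarrow\vert\calX\vert)$ for the composite $g\colon U\rightarrow\calX\xrightarrow{f}\calY$, it suffices by the universal property of the quotient topology to prove that $\vert g\vert\colon\vert U\vert\rightarrow\vert\calY\vert$ is continuous. I would form $W=U\times_\calY V$, an \'etale analytic space, choose an analytic-space atlas $W'\rightarrow W$, and observe that $\vert W'\vert\rightarrow\vert U\vert$ is a quotient map (the projection $W\rightarrow U$ being surjective universally submersive) while $\vert W'\vert\rightarrow\vert V\vert$ is continuous as the map underlying a morphism of analytic spaces and $\vert V\vert\rightarrow\vert\calY\vert$ is the continuous quotient map from (i). The commutativity coming from $W=U\times_\calY V$ together with the universal property of the quotient topology then gives continuity of $\vert g\vert$, and hence of $\vert f\vert$.

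The step I expect to be the main obstacle is (i). The difficulty is that fiber products like $U'\times_\calX U$ and $U\times_\calY V$ are in general only representable by \'etale analytic spaces rather than honest analytic spaces, which forces one to descend to a further analytic-space atlas and to propagate the properties \emph{surjective}, \emph{universally submersive}, and (in the \'etale case) \emph{open} carefully through the composites; moreover the comparison map on points is only surjective, not bijective, so Lemma \ref{lemma_morphismstopspace} (ii) must be used precisely to control images of open sets rather than to identify the spaces outright.
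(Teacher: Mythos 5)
Your proof is correct and follows essentially the same route as the paper's: part (i) via the fiber product with the defining atlas and the two-out-of-three property of topological quotient maps, part (ii) via the identification $R\simeq U\times_\calX U$ from Lemma \ref{lemma_groupoidquot=cartesian} together with the surjectivity statements of Lemma \ref{lemma_morphismstopspace}, and part (iii) by covering $f$ with a morphism of honest analytic spaces and invoking the universal property of the quotient topology. In fact you spell out two points the paper's written proof leaves implicit --- the openness clause of (i) (via the open \'etale map $\vert W\vert\rightarrow\vert U\vert$ and Lemma \ref{lemma_morphismstopspace} (ii)) and the careful descent from the \'etale analytic space $U'\times_\calX U$ to an analytic-space atlas $W$ --- so these are refinements of the same argument, not a different one.
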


For the proof of Proposition \ref{prop_topanalstack} we simply adapt the proof of \cite[Tag 04XL]{stacks-project} to the non-Archimedean analytic situation.

\begin{proof}
Taking the fiber product $U\times_\mathcal{X}U'$ induces a diagram
\begin{equation*}\begin{CD}
\vert U\times_\mathcal{X} U'\vert @>>>\vert U'\vert\\
@VVV @VVV\\
\vert U\vert @>>> \vert\mathcal{X}\vert
\end{CD}\end{equation*}
where the upper horizontal and the left vertical arrow are surjective topological quotient maps. This immediately implies that the surjective map $\vert U'\vert\rightarrow\vert\mathcal{X}\vert$ is also a topological quotient map.

For part (ii) we remark that by Lemma \ref{lemma_morphismstopspace} (iii) the induced map $\vert U\vert\rightarrow\vert\mathcal{X}\vert$ is surjective. Since $R\rightarrow U\times_\mathcal{X}U$ is also surjective, the induced morphism $\vert R\vert\rightarrow \vert U\vert\times_{\vert \mathcal{X}\vert}\vert U\vert$ is surjective by Lemma \ref{lemma_morphismstopspace} (ii) and (iii). Thus the image of $\vert R\vert\rightarrow \vert U\vert\times\vert U\vert$ is exactly the set of pairs $(u_1,u_2)$ consisting of elements $u_1$ and $u_2$ in $\vert U\vert$ that have the same image in $\vert \mathcal{X}\vert$, i.e. $\vert\mathcal{X}\vert$ is the set-theoretic quotient of $\vert U\vert$ by the equivalence relation $\vert R\vert\rightarrow\vert U\vert\times\vert U\vert$. This defines a topological quotient by (i).

Consider now part (iii): Take an atlas $V\rightarrow \calY$ and a representable surjective \'etale cover $U\rightarrow \calX\times_\calY V$. This gives rise to a $2$-commutative diagram 
\begin{equation*}\begin{CD}
U@>f'>> V\\
@VVV @VVV\\
\mathcal{X}@>f>>\mathcal{Y}
\end{CD}\end{equation*}
such that the vertical arrows are universally submersive surjective morphisms. But then the vertical arrows induce surjective quotient maps of the underlying topological spaces and therefore the continuity of $\vert f'\vert$ implies that $\vert f\vert$ is continuous.
\end{proof}

\begin{corollary}\label{cor_loccomplocpathconn}
The underlying topological space $\vert\mathcal{X}\vert$ of an analytic Deligne-Mumford stack $\mathcal{X}$ is locally compact and locally path-connected. 
\end{corollary}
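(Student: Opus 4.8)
The plan is to transport both topological properties from an \'etale atlas to $\vert\calX\vert$ along the open quotient map furnished by Proposition \ref{prop_topanalstack}. Since $\calX$ is an analytic Deligne--Mumford stack, Definition \ref{def_analstack} provides an \'etale atlas $U\rightarrow\calX$ with $U$ an analytic space. By Proposition \ref{prop_topanalstack} (i) the induced map $\pi\mathrel{\mathop:}\vert U\vert\rightarrow\vert\calX\vert$ is a continuous surjective topological quotient map, and---crucially, because the atlas is \'etale---it is \emph{open}. It is exactly this openness, unavailable for a merely $G$-smooth atlas of a general analytic stack, that forces the restriction to the Deligne--Mumford case.

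Next I would invoke the foundational topological results of Berkovich: the underlying topological space $\vert U\vert$ of any $k$-analytic space is locally compact and locally path-connected (see \cite[\S3.2]{Berkovich_book} and \cite[\S1]{Berkovich_etalecoho}). The remaining content is then the purely point-set topological assertion that both properties descend along a continuous open surjection $\pi\mathrel{\mathop:}Y_0\rightarrow Y$ whenever $Y_0$ is locally compact and locally path-connected.

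For local compactness, given $y\in\vert\calX\vert$ I would choose $x\in\vert U\vert$ with $\pi(x)=y$ and a compact neighbourhood $K$ of $x$, so that $x\in W\subseteq K$ for some open $W$; then $\pi(W)$ is open, $\pi(K)$ is compact, and $y\in\pi(W)\subseteq\pi(K)$, exhibiting $\pi(K)$ as a compact neighbourhood of $y$. For local path-connectedness, given $y$ and an open neighbourhood $V\ni y$, I would pick $x\in\pi^{-1}(y)$ and, using local path-connectedness of $\vert U\vert$, a path-connected open $W$ with $x\in W\subseteq\pi^{-1}(V)$; then $\pi(W)$ is a path-connected (continuous image of $W$) open (openness of $\pi$) neighbourhood of $y$ contained in $V$. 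Both verifications use only continuity, openness, and surjectivity of $\pi$, and neither requires any separation hypothesis on $Y$.

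I expect no serious obstacle here: once openness of $\pi$ is secured via the \'etale hypothesis, the arguments are elementary. The only points requiring genuine care are locating the correct form of Berkovich's local statements for possibly non-good or non-separated analytic spaces in $\mathbf{An}_k$, and being explicit that it is the \'etaleness of the atlas---not merely its being a universally submersive surjection---that licenses the appeal to the open-quotient clause of Proposition \ref{prop_topanalstack} (i).
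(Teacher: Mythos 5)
Your proof is correct and takes exactly the same route as the paper: choose an \'etale atlas $U\rightarrow\calX$, invoke Proposition \ref{prop_topanalstack} (i) to see that $\vert U\vert\rightarrow\vert\calX\vert$ is an \emph{open} quotient map, and descend local compactness and local path-connectedness from $\vert U\vert$. The paper's proof is a one-sentence version of yours, leaving implicit the elementary point-set verifications (and the openness-requires-\'etaleness point) that you spell out.
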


\begin{proof}
Choose a surjective \'etale morphism $U\rightarrow \mathcal{X}$. By Proposition \ref{prop_topanalstack} (i) the quotient map $\vert U\vert\rightarrow\vert\mathcal{X}\vert$ is open and therefore $\vert\mathcal{X}\vert$ is locally compact and locally path-connected, since $\vert U\vert$ is locally compact and locally path-connected.
\end{proof}

\begin{corollary}\label{cor_fingrpquot}
Let $U$ be an analytic space and $\Gamma$ be a finite group acting analytically on $U$. Then the underlying topological space of the quotient stack $[U/\Gamma]$ is equal to $\vert U\vert /\Gamma$.
\end{corollary}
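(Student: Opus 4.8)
The plan is to apply Proposition~\ref{prop_topanalstack}~(ii) to the standard groupoid presentation of $[U/\Gamma]$. As in the Example following Proposition~\ref{prop_groupoidquot=analstack}, the quotient stack $[U/\Gamma]$ is the groupoid quotient $[U/R]$ associated to the action groupoid $R=\Gamma\times U\rightrightarrows U$, with source $s(\gamma,x)=x$ and target $t(\gamma,x)=\gamma\cdot x$. Since $\Gamma$ is finite this groupoid is \'etale and surjective, so $[U/\Gamma]$ is an analytic (Deligne--Mumford) stack and the hypotheses of Proposition~\ref{prop_topanalstack} are satisfied.

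First I would identify the underlying set $\vert R\vert$. Because $\Gamma$ is a finite group, the analytic space $\Gamma\times U$ is the disjoint union $\coprod_{\gamma\in\Gamma}U$ of copies of $U$ indexed by the elements of $\Gamma$; as the functor $\vert\cdot\vert$ commutes with finite disjoint unions, this gives a natural identification $\vert R\vert=\Gamma\times\vert U\vert$, where $\Gamma$ carries the discrete topology. By Proposition~\ref{prop_topanalstack}~(ii), the space $\vert[U/\Gamma]\vert$ is the topological quotient of $\vert U\vert$ by the equivalence relation defined by the image of $(\vert s\vert,\vert t\vert)\colon\vert R\vert\to\vert U\vert\times\vert U\vert$. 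Under the identification above, and using the functoriality of $\vert\cdot\vert$ from Section~\ref{section_ptsanstacks}, this map sends $(\gamma,x)$ to $(x,\gamma\cdot x)$, where $x\mapsto\gamma\cdot x$ denotes the self-homeomorphism of $\vert U\vert$ induced by the automorphism $\gamma$ of $U$. Hence its image is exactly $\{(x,\gamma\cdot x)\mid x\in\vert U\vert,\ \gamma\in\Gamma\}$, the orbit equivalence relation of the $\Gamma$-action on $\vert U\vert$, whose topological quotient is $\vert U\vert/\Gamma$.

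There is no serious obstacle here; the statement is essentially a bookkeeping consequence of Proposition~\ref{prop_topanalstack}~(ii). The only points that require care are the two identifications invoked above: that the underlying topological space of $\Gamma\times U$ is the disjoint union of $\vert\Gamma\vert$ copies of $\vert U\vert$, and that the induced set-theoretic action of $\Gamma$ on $\vert U\vert$ agrees with the orbit action coming from the analytic automorphisms $\gamma\colon U\to U$. Both follow from the functoriality of $\vert\cdot\vert$ together with the fact that it preserves finite coproducts, so the argument reduces to unwinding the definitions.
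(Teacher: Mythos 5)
Your proposal is correct and follows exactly the paper's route: the paper's own proof is the one-line observation that the statement ``immediately follows from Proposition~\ref{prop_topanalstack}~(ii),'' and your argument simply makes explicit the bookkeeping that proof leaves implicit (identifying $\vert\Gamma\times U\vert$ with $\coprod_{\gamma\in\Gamma}\vert U\vert$ and the image equivalence relation with the orbit relation). Nothing is missing; your writeup is a faithful, slightly more detailed version of the intended argument.
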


\begin{proof}
This immediately follows from Proposition \ref{prop_topanalstack} (ii).
\end{proof}

\begin{remark}
Let $G$ be an analytic group that is operating on an analytic space $X$ and let $H$ be a (not necessarily analytic) subgroup of $G$. In \cite[Section 5.1]{Berkovich_book} the author introduces a topological space $X/H$ that functions as a quotient of $X$ by $H$. Its points are precisely the orbits (in the sense of \cite[Section 5.1]{Berkovich_book}) of $H$ in $X$ and $X/H$ is endowed with the quotient topology from $X$. Therefore, if $H$ is an analytic group itself, then by Proposition \ref{prop_topanalstack} (i) the topological space $X/H$ is naturally homeomorphic to $\big\vert[X/H]\big\vert$. So, in this case $X/H$ naturally carries the structure of an analytic stack.
\end{remark}

\subsection{Topology and analytification}
Let $\mathcal{X}$ be an algebraic stack that is locally of finite type over $k$. For a non-Archimedean extension $K$ of $k$ we have a natural equivalence
\begin{equation*}
\HOM(\calM(K),\calX^{an})\simeq\HOM(\Spec K, \calX)
\end{equation*}
and therefore one may describe $\vert\calX^{an}\vert$ as the set of equivalence classes of pairs $(K,p)$ consisting of a non-Archimedean extension $K$ of $k$ and a morphism $\Spec K\rightarrow \mathcal{X}$. Two such paris $(K,p)$ and $(L,q)$ are hereby \emph{equivalent}, if there is a non-Archimedean field extension $\Omega$ of both $K$ and $L$ such that the diagram
\begin{equation*}\begin{CD}
\Spec \Omega @>>>\Spec L\\
@VVV @VVV\\
\Spec K @>>>\mathcal{X}
\end{CD}\end{equation*}
is $2$-commutative.

Now suppose in addition that $\calX$ be a separated algebraic Deligne-Mumford stack locally of finite type over $k$. By \cite[Corollary 1.3 (1)]{KeelMori_groupoidquotients} the stack $\calX$ admits a coarse moduli space $X$. Since the coarse moduli space $X$ is separated, \cite[Theorem 1.2.1]{ConradTemkin_algspaces} implies that the \'etale analytic space $X^{an}$ is in fact an analytic space (also see Remark \ref{remark_etanspacerepdiag} (ii)).

\begin{proposition}\label{prop_topologycoarsemoduli}
The topological spaces $\vert\calX^{an}\vert$ and $\vert X^{an}\vert$ are naturally homeomorphic. 
\end{proposition}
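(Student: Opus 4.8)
The plan is to build the comparison map from the coarse moduli morphism, check that it is a continuous bijection using the defining properties of a coarse moduli space, and then upgrade it to a homeomorphism by reducing to the local model $[V/\Gamma]$ of a separated Deligne--Mumford stack and invoking Corollary \ref{cor_fingrpquot}. First I would construct the map: the coarse moduli morphism $\pi\mathrel{\mathop:}\calX\to X$ of \cite{KeelMori_groupoidquotients} analytifies to $\pi^{an}\mathrel{\mathop:}\calX^{an}\to X^{an}$, and by Proposition \ref{prop_topanalstack} (iii) the induced map $g=\vert\pi^{an}\vert\mathrel{\mathop:}\vert\calX^{an}\vert\to\vert X^{an}\vert$ is continuous.

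Bijectivity of $g$ I would prove directly from the description of $\vert\calX^{an}\vert$ (resp. $\vert X^{an}\vert$) recalled at the beginning of this subsection as equivalence classes of pairs $(K,\Spec K\to\calX)$ (resp. $(K,\Spec K\to X)$). The two inputs are that every non-Archimedean field $K$ admits an algebraically closed non-Archimedean extension $\Omega$ (for instance the completed algebraic closure $\widehat{\overline{K}}$), and that for such $\Omega$ the coarse space induces a bijection $\calX(\Omega)/{\cong}\xrightarrow{\sim}X(\Omega)$. Surjectivity then follows by enlarging $K$ to an algebraically closed $\Omega$ and lifting the resulting $\Omega$-point of $X$ to $\calX$; injectivity follows because two $\Omega$-points of $\calX$ with isomorphic images in $X(\Omega)$ are already isomorphic, which is exactly the datum witnessing equivalence of the two points in $\vert\calX^{an}\vert$.

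The geometric heart is the local model. By the local structure of separated Deligne--Mumford stacks underlying the Keel--Mori construction, there is an \'etale covering $\{X_i\to X\}$ of the coarse space together with, for each $i$, a finite morphism $V_i\to X_i$ carrying an action of a finite group $\Gamma_i$ such that $\calX\times_X X_i\cong[V_i/\Gamma_i]$ and $X_i=V_i/\Gamma_i$. Analytifying, and using that $(.)^{an}$ commutes with fibre products and with quotient stacks by finite groups (the Example following Proposition \ref{prop_analytificationgroupoid}), I obtain $\calX^{an}\times_{X^{an}}X_i^{an}\cong[V_i^{an}/\Gamma_i]$. Corollary \ref{cor_fingrpquot} identifies $\big\vert[V_i^{an}/\Gamma_i]\big\vert$ with $\vert V_i^{an}\vert/\Gamma_i$, so the chart case of the proposition becomes exactly the assertion that the finite morphism $V_i^{an}\to X_i^{an}=(V_i/\Gamma_i)^{an}$ induces a homeomorphism $\vert V_i^{an}\vert/\Gamma_i\xrightarrow{\sim}\vert X_i^{an}\vert$. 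This I would deduce from the fact that a finite surjective morphism of analytic spaces is a closed, hence topological quotient, map whose fibres over $X_i^{an}$ are precisely the $\Gamma_i$-orbits; in particular each chart map is an \emph{open} quotient map.

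Finally I would globalise. Fixing an \'etale atlas $U\to\calX$ with $U$ a scheme, so that $u\mathrel{\mathop:}\vert U^{an}\vert\to\vert\calX^{an}\vert$ is an open quotient map by Proposition \ref{prop_topanalstack} (i), it suffices to show that $g$ is open; since $g\circ u=\vert h^{an}\vert$ for $h\mathrel{\mathop:}U\to X$ the composite atlas-then-coarse morphism, this reduces to showing $\vert h^{an}\vert\mathrel{\mathop:}\vert U^{an}\vert\to\vert X^{an}\vert$ is open. The maps $\vert X_i^{an}\vert\to\vert X^{an}\vert$ are open and jointly surjective, and a point-lifting argument through fibre products via Lemma \ref{lemma_morphismstopspace} (ii) lets me verify openness after base change to each $X_i$; there the composite factors through the chart $[V_i^{an}/\Gamma_i]$, and after passing to the atlas $V_i^{an}\to[V_i^{an}/\Gamma_i]$ it becomes, modulo an \'etale surjection, the composite of an \'etale map with the open quotient $\vert V_i^{an}\vert\to\vert X_i^{an}\vert$ of the previous paragraph, hence open. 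A continuous open bijection is a homeomorphism. I expect the main obstacle to be precisely this globalisation: because $\vert\cdot\vert$ need not commute with fibre products in the analytic category (Lemma \ref{lemma_morphismstopspace} (ii) yields only surjectivity), the passage from the clean chart-wise homeomorphisms to a global statement must be organised as an openness-descent argument through an atlas rather than as naive gluing, and the underlying identification $\vert V^{an}\vert/\Gamma\cong\vert(V/\Gamma)^{an}\vert$ for finite group quotients must be handled carefully using the theory of finite morphisms and finite quotients of Berkovich spaces in \cite{Berkovich_book}.
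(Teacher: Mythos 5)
Your proposal is correct and follows essentially the same route as the paper's proof: a continuous bijection from the coarse-moduli property on points valued in algebraically closed non-Archimedean fields, the local presentation $\calX\times_X X_i\simeq[V_i/\Gamma_i]$ over an \'etale cover of $X$ (the paper cites \cite[Lemma 2.2.3]{AbramovichVistoli_compactifyingstablemaps}), Corollary \ref{cor_fingrpquot} on the charts, and a concluding openness argument via the jointly surjective open maps $\vert V_i^{an}\vert\rightarrow\vert\calX^{an}\vert$ and $\vert X_i^{an}\vert\rightarrow\vert X^{an}\vert$. The only difference is one of bookkeeping: you justify the chart homeomorphism $\vert V_i^{an}\vert/\Gamma_i\xrightarrow{\sim}\vert X_i^{an}\vert$ via finiteness of $V_i^{an}\rightarrow X_i^{an}$, whereas the paper absorbs this into the identification $\vert U_i^{an}\vert/\Gamma_i=\vert U_i^{an}/\Gamma_i\vert$, and you route the openness check through the atlas morphism $\vert h^{an}\vert$ rather than chasing the commutative squares directly.
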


\begin{proof}
For every non-Archimedean algebraically closed field $K$ extending $k$ there is a natural equivalence $\calX(K)\simeq X(K)$ by the definition of coarse moduli spaces. Therefore the above description immediately implies that $\vert\calX^{an}\vert\xrightarrow{\sim} \vert X^{an}\vert$ is a continuous bijection. We still need to show that this map is a homeomorphism. 
 
By \cite[Lemma 2.2.3]{AbramovichVistoli_compactifyingstablemaps} there is an \'etale covering $(X_i\rightarrow X)$ of $X$ as well as a scheme $U_i$ locally of finite type over $k$ and a finite group $\Gamma_i$ such that the pullback $\calX\times_XX_i$ is equivalent to $[U_i/\Gamma_i]$. Since the $X_i\rightarrow X$ are etale, the $X_i$ are coarse moduli spaces of $\calX\times_XX_i=[U_i/\Gamma_i]$ and therefore $X_i=U_i/\Gamma_i$. By Proposition \ref{prop_analytificationgroupoid} we have $[U_i/\Gamma_i]^{an}\simeq[U_i^{an}/\Gamma_i]$ and therefore Corollary \ref{cor_fingrpquot} shows
\begin{equation*}
\big\vert [U_i/\Gamma_i]^{an}\big\vert=\big\vert U_i^{an}\big\vert/\Gamma_i=\big\vert U_i^{an}/\Gamma_i\big\vert
\end{equation*} 
on the level of the underlying topological spaces. So the morphism $\vert\calX\times_XX_i\vert\rightarrow \vert X_i\vert$ is a homeomorphism. This gives rise to commutative diagrams
 \begin{equation*}\begin{CD}
 \vert U_i^{an}\vert @>>> \vert X_i^{an}\vert\\
 @VVV @VVV\\
\vert \calX^{an}\vert @>>>\vert X^{an}\vert
 \end{CD}\end{equation*}
where both the two vertical and the upper horizontal arrow are open maps. Since the $X_i$ cover $X$ and the $U_i$ cover $\calX$, this is enough to show that the continuous bijection $\vert\calX^{an}\vert\xrightarrow{\sim}\vert X^{an}\vert$ is open and therefore a homeomorphism.
\end{proof}

\begin{remark}
Suppose that $k$ is an algebraically closed field endowed with the trivial norm. In \cite{AbramovichCaporasoPayne_tropicalmoduli} the authors show that, given a proper toroidal algebraic Deligne-Mumford stack $\calX$, the analytic space $X^{an}$ associated to its coarse moduli space $X$ admits a strong deformation retraction $\mathbf{p}_\calX$ of $X^{an}$ onto its \emph{skeleton} $\frakS(\calX)$, a closed subset of $\vert X^{an}\vert$ that has the structure of a generalized extended cone complex in the sense of \cite[Section 2]{AbramovichCaporasoPayne_tropicalmoduli}. Proposition \ref{prop_topologycoarsemoduli} tells us that $\frakS(\calX)$ naturally embeds into $\vert \calX^{an}\vert$ and $\mathbf{p}_\calX$ is actually a strong deformation retraction of $\vert\calX^{an}\vert$. 
\end{remark}


\section{Skeletons and stack quotients}\label{section_trop=quot}

The goal of this section is to prove Theorem \ref{thm_trop=quot}.
Let $T\simeq \G_m^n$ be a split algebraic torus over $k$ and denote by $N$ the dual of its character lattice $M$. Suppose that $X=X(\Delta)$ is a $T$-toric variety defined by a rational polyhedral fan $\Delta$ in $N_\R=N\otimes\R$. We refer the reader to \cite{Fulton_toricvarieties} for the standard notation concerning toric varieties. 

We recall from \cite{Kajiwara_troptoric} and \cite{Payne_anallimittrop} (also see \cite[Section 5]{Rabinoff_newtonpolygon}) that the continuous and proper \emph{tropicalization map}
\begin{equation*}
\trop_\Delta\mathrel{\mathop:}X^{an}\longrightarrow N_\R(\Delta)
\end{equation*} 
from $X^{an}$ into a partial compactification $N_\R(\Delta)$ of $N_\R$ is uniquely determined by its restrictions to the $T$-invariant open affine subsets $U_\sigma$ for cones $\sigma$ in $\Delta$. In this case the codomain $N_\R(\sigma)\subseteq N_\R(\Delta)$ is the set $\Hom(S_\sigma,\Rbar)$, where $\Rbar=\big(\R\cup\{\infty\},+\big)$, and $N_\R(\sigma)$ is endowed with the topology of pointwise convergence. On $U_\sigma=\Spec k[S_\sigma]$ the tropicalization map
\begin{equation*}
\trop_\sigma\mathrel{\mathop:}U_\sigma^{an}\longrightarrow N_\R(\sigma)
\end{equation*}
is defined by associating to an element $x\in X^{an}$ the homomorphism $s\mapsto -\log\vert\chi^s\vert_x$. 

\begin{lemma}\label{lemma_trop=skeleton}
There is a strong deformation retraction $\mathbf{p}_\Delta\mathrel{\mathop:}X^{an}\rightarrow X^{an}$ onto a closed subset $\frakS(X)$ of $X^{an}$ as well as a homeomorphism $J_\Delta\mathrel{\mathop:}N_\R(\Delta)\xrightarrow{\sim} \frakS(X)$ making the diagram
\begin{center}\begin{tikzpicture}
  \matrix (m) [matrix of math nodes,row sep=1em,column sep=2em,minimum width=2em]
  {  
  & & N_\R(\Delta)  \\ 
  X^{an}&  &\\
  & &  \frakS(X) \\
  };
  \path[-stealth]
    (m-2-1) edge node [above] {$\trop_{\Delta}\ \ \ \ $} (m-1-3)
    (m-2-1) edge node [below] {$\mathbf{p}_\Delta$} (m-3-3)
    (m-1-3) edge node [right] {$J_\Delta$} (m-3-3);
\end{tikzpicture}\end{center}
commute. 
\end{lemma}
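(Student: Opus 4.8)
The plan is to recover $\frakS(X)$, $J_\Delta$, and $\mathbf{p}_\Delta$ directly from the monomial points of $X^{an}$, following Berkovich's construction for $\mathbb{P}^n$ (see \cite[Section 6.1]{Berkovich_book}) and Thuillier's treatment of the toroidal case over a trivially valued field (see \cite[Section 2]{Thuillier_toroidal}); here I only sketch how these pieces fit together.

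First I would build the section $J_\Delta$ chart by chart. For a cone $\sigma$ in $\Delta$ and a homomorphism $\nu\in N_\R(\sigma)=\Hom(S_\sigma,\Rbar)$, set
\[
\bigl|\,\sum\nolimits_{s} a_s\chi^s\,\bigr|_{J_\sigma(\nu)}=\max_{s}\,|a_s|\,e^{-\nu(s)}
\]
(with the convention $e^{-\infty}=0$), the weighted Gauss (monomial) seminorm on $k[S_\sigma]$. Since $S_\sigma$ is a cancellative submonoid of the lattice $M$, this seminorm is multiplicative (its associated graded ring is the domain obtained from the semigroup algebra of $S_\sigma$) and hence defines a point $J_\sigma(\nu)\in U_\sigma^{an}$ with $-\log|\chi^s|_{J_\sigma(\nu)}=\nu(s)$; thus $\trop_\sigma\circ J_\sigma=\id$. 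The map $J_\sigma$ is continuous because $|f|_{J_\sigma(\nu)}$ is a finite maximum of functions depending continuously on $\nu$, and for a face $\tau\preceq\sigma$ the monomial seminorms are compatible with the localizations $k[S_\sigma]\hookrightarrow k[S_\tau]$ that realize $U_\tau\subseteq U_\sigma$. Hence the $J_\sigma$ glue to a continuous section $J_\Delta\colon N_\R(\Delta)\to X^{an}$ of $\trop_\Delta$, and I set $\frakS(X)=J_\Delta(N_\R(\Delta))$.

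Next I would deduce the homeomorphism and the retraction formally. The restriction $\trop_\Delta|_{\frakS(X)}\colon\frakS(X)\to N_\R(\Delta)$ is continuous and, by the section property, is a two-sided inverse to $J_\Delta$, so $J_\Delta$ is a homeomorphism onto $\frakS(X)$. Setting $\mathbf{p}_\Delta=J_\Delta\circ\trop_\Delta$, the identity $\trop_\Delta\circ J_\Delta=\id$ gives at once $\mathbf{p}_\Delta\circ\mathbf{p}_\Delta=\mathbf{p}_\Delta$ and $\mathbf{p}_\Delta|_{\frakS(X)}=\id$, so $\mathbf{p}_\Delta$ is a continuous retraction of $X^{an}$ onto $\frakS(X)$. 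Since $X^{an}$ is Hausdorff, $\frakS(X)=\{x\in X^{an}:\mathbf{p}_\Delta(x)=x\}$ is closed. The required triangle commutes by the very definition $\mathbf{p}_\Delta=J_\Delta\circ\trop_\Delta$.

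The substantive content, and the step I expect to be the main obstacle, is upgrading $\mathbf{p}_\Delta$ to a \emph{strong} deformation retraction: producing a homotopy $H\colon X^{an}\times[0,1]\to X^{an}$ with $H_0=\id$, $H_1=\mathbf{p}_\Delta$, and $H_t|_{\frakS(X)}=\id$ for all $t$. On each chart I would use Berkovich's monomial homotopy, which interpolates a seminorm $x$ toward the monomial point $J_\sigma(\trop_\sigma(x))$; the genuinely delicate points are the continuity of $H$ up to the boundary strata at infinity of $N_\R(\Delta)$ (where some $\nu(s)=\infty$) and the compatibility of the chartwise homotopies along the toric stratification, so that they glue to a global $H$ preserving each $U_\sigma^{an}$. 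Both are precisely the analytic facts established in \cite[Section 2]{Thuillier_toroidal} in the trivially valued case and in \cite[Section 6.1]{Berkovich_book} for the ambient projective space, and I would invoke those rather than reprove them.
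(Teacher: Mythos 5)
Your proposal is correct and follows essentially the same route as the paper: both construct $J_\sigma$ chart by chart via the monomial seminorm $f\mapsto\max_s |a_s|e^{-\nu(s)}$ on $k[S_\sigma]$, obtain the retraction as the map $x\mapsto J_\sigma(\trop_\sigma(x))$ (the paper writes this same seminorm $\mathbf{p}_\sigma(x)(f)=\max_s|a_s|\,|\chi^s|_x$ directly and checks idempotency, which you instead deduce formally from $\trop_\sigma\circ J_\sigma=\id$), glue along faces, and both defer the upgrade to a \emph{strong} deformation retraction to \cite[Section 2.2]{Thuillier_toroidal}, which the paper likewise leaves to the reader as not needed for Theorem \ref{thm_trop=quot}.
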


The deformation retract $\frakS(X)$ is called the \emph{non-Archimedean skeleton} of $X$. The proof of Lemma \ref{lemma_trop=skeleton} uses techniques that have originally appeared in \cite[Section 6]{Berkovich_book}. In particular, it generalizes the constructions of \cite[Section 2]{Thuillier_toroidal} to non-Archimedean ground fields $k$ that do not necessarily carry the trivial norm.

\begin{proof}[Proof of Lemma \ref{lemma_trop=skeleton}]
Consider a $T$-invariant open subset $U_\sigma=\Spec k[S_\sigma]$ for a cone $\sigma$ in $\Delta$. Given a point $x\in U_\sigma^{an}$ we define the point $\mathbf{p}_\sigma(x)$ as the seminorm on $k[S_\sigma]$ given by
\begin{equation*}
\mathbf{p}_\sigma(x)(f)=\max_{s\in S_\sigma}\vert a_s\vert \vert\chi^s\vert_x
\end{equation*}
for an element $f=\sum_{s\in S_\sigma}a_s\chi^s$ in $k[S_\sigma]$. We also define the image $J_\sigma(u)$ of an element $u\in N_\R(\sigma)=\Hom(S_\sigma,\Rbar)$ as the seminorm on $k[S_\sigma]$ given by
\begin{equation*}
J(u)(f)=\max_{s\in S_\sigma}\vert a_s\vert \exp\big(-u(s)\big)
\end{equation*}
for an element $f=\sum_{s\in S_\sigma}a_s\chi^s$ in $k[S_\sigma]$. 

One immediately verifies that $\mathbf{p_\sigma}$ is continuous, that the equality $\mathbf{p}_\sigma\circ\mathbf{p}_\sigma=\mathbf{p}_\sigma$ holds, and that $J_\sigma$ defines a homeomorphism $N_\R(\sigma)\xrightarrow{\sim} \frakS(U_\sigma)$. Moreover we can easily check that these constructions on $T$-invariant affine open patches are compatible with restrictions and we therefore obtain a global retraction $\mathbf{p}_\Delta$ as well as a global homeomorphism $J_\Delta$. 

It remains to show that there is a strong homotopy between $\mathbf{p}_\Delta$ and the identity map on $X^{an}$. This immediate generalization of the theory developed in \cite[Section 2.2]{Thuillier_toroidal} is left to the reader, since it is not relevant for the proof of Theorem \ref{thm_trop=quot}.
\end{proof}

Denote by $\mu\mathrel{\mathop:}T\times X\rightarrow X$ the operation of $T$ on the toric variety $X$. Recall that on a $T$-invariant open affine  subset $U_\sigma$ for a cone $\sigma$ in $\Delta$ this morphism is induced by the homomorphism
\begin{equation*}\begin{split}
\mu^\#\mathrel{\mathop:}k[S_\sigma]&\longrightarrow k[M]\otimes K[S_\sigma]\\
\chi^s&\longmapsto \chi^s\otimes\chi^s \ .
\end{split}\end{equation*}
Moreover, we consider the projection morphism $\pi\mathrel{\mathop:}T\times X\rightarrow X$, which is induced by the homomorphism
\begin{equation*}\begin{split}
\pi^\#\mathrel{\mathop:}k[S_\sigma]&\longrightarrow k[M]\otimes K[S_\sigma]\\
\chi^s&\longmapsto 1\otimes\chi^s \ .
\end{split}\end{equation*}

\begin{lemma}\label{lemma_otimeshatnonbeth}
For a point $x\in U_\sigma^{an}$ consider the point $\eta\hat{\otimes}x\in T^\circ\times U_\sigma^{an}$ given by the seminorm
\begin{equation*}
\vert f\vert_{\eta\hat{\otimes}x}=\max_{m\in M}\vert a_m\vert \vert f_m\vert_x
\end{equation*}
for an element $f=\sum_{m\in M}a_m \chi^m\otimes f_m\in k[M]\otimes_kk[S_\sigma]$ with unique regular functions $f_m\in k[S_\sigma]$. Then we have
\begin{equation*}
\pi^{an}(\eta\hat{\otimes}x)=x
\end{equation*}
as well as 
\begin{equation*}
\mu^{an}(\eta\hat{\otimes}x)=\bfp_\sigma(x) \ .
\end{equation*}
\end{lemma}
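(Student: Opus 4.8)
The plan is to verify both identities by direct computation, using that the map on Berkovich points induced by a morphism of affine varieties is simply the pullback of multiplicative seminorms. Since $\pi$ and $\mu$ are induced on $U_\sigma$ by the algebra homomorphisms $\pi^\#,\mu^\#\colon k[S_\sigma]\to k[M]\otimes_k k[S_\sigma]$ recalled just above, the maps $\pi^{an}$ and $\mu^{an}$ send the seminorm $\eta\hat\otimes x$ to the seminorms on $k[S_\sigma]$ given by $g\mapsto\vert\pi^\#(g)\vert_{\eta\hat\otimes x}$ and $g\mapsto\vert\mu^\#(g)\vert_{\eta\hat\otimes x}$ respectively. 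Thus the whole statement reduces to evaluating these two pullback seminorms on an arbitrary element $g=\sum_{s\in S_\sigma}c_s\chi^s\in k[S_\sigma]$, using the defining formula for $\eta\hat\otimes x$. Before doing so I would record that $\eta\hat\otimes x$ is indeed a well-defined, bounded, multiplicative seminorm: it is the relative Gauss norm attached to $x$ (computed via the unique decomposition with respect to the $k$-basis $\{\chi^m\}_{m\in M}$ of $k[M]$), so multiplicativity is the standard Gauss-norm argument, and $\vert\chi^m\otimes 1\vert_{\eta\hat\otimes x}=\vert 1\vert_x=1$ for all $m\in M$ confirms that the point actually lies in $T^\circ\times U_\sigma^{an}$.

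For the first identity I would compute $\pi^\#(g)=\sum_{s}c_s(1\otimes\chi^s)=1\otimes g$. Writing this in the unique form $\sum_{m\in M}\chi^m\otimes F_m$ with $F_m\in k[S_\sigma]$, only the term $m=0$ survives, with $F_0=g$. The defining formula then gives $\vert\pi^\#(g)\vert_{\eta\hat\otimes x}=\vert g\vert_x$, and since this holds for every $g$ we obtain $\pi^{an}(\eta\hat\otimes x)=x$, as claimed.

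For the second identity I would compute $\mu^\#(g)=\sum_{s\in S_\sigma}c_s(\chi^s\otimes\chi^s)=\sum_{s\in S_\sigma}\chi^s\otimes(c_s\chi^s)$, which is already in the unique form above with $F_s=c_s\chi^s$ for each $s\in S_\sigma$. Hence the defining formula yields
\begin{equation*}
\vert\mu^\#(g)\vert_{\eta\hat\otimes x}=\max_{s\in S_\sigma}\vert c_s\chi^s\vert_x=\max_{s\in S_\sigma}\vert c_s\vert\,\vert\chi^s\vert_x,
\end{equation*}
which is exactly the value $\mathbf{p}_\sigma(x)(g)$ read off from the definition of the retraction $\mathbf{p}_\sigma$ given in the proof of Lemma \ref{lemma_trop=skeleton}. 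As this holds for all $g$, we conclude $\mu^{an}(\eta\hat\otimes x)=\mathbf{p}_\sigma(x)$. The computations themselves are entirely routine; the only step requiring a little care—which I would isolate as a short preliminary remark rather than burying it in the calculation—is the bookkeeping around the unique $k$-basis decomposition of elements of $k[M]\otimes_k k[S_\sigma]$, together with the verification that the stated formula genuinely defines a point of the Berkovich space, i.e. that $\eta\hat\otimes x$ is multiplicative and not merely a seminorm.
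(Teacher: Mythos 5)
Your proof is correct and takes essentially the same route as the paper's: both arguments evaluate the pullback seminorms $g\mapsto\vert\pi^\#(g)\vert_{\eta\hat\otimes x}$ and $g\mapsto\vert\mu^\#(g)\vert_{\eta\hat\otimes x}$ on an arbitrary $g=\sum_{s\in S_\sigma}c_s\chi^s$, using that $\pi^\#(g)=1\otimes g$ and $\mu^\#(g)=\sum_s\chi^s\otimes(c_s\chi^s)$ together with the defining formula for $\eta\hat\otimes x$. Your preliminary verification that $\eta\hat\otimes x$ is a genuine multiplicative seminorm (a relative Gauss norm) lying in $T^\circ\times U_\sigma^{an}$ is a sound addition that the paper leaves implicit.
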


\begin{proof}
Let $f=\sum_{s\in S_\sigma}a_s\chi^s\in k[S_\sigma]$. Then we have 
\begin{equation*}
\vert f\vert_{\pi^{an}(\eta\hat{\otimes}x)}=\Big\vert\sum_{s\in S_\sigma}a_s1\otimes\chi^s\Big\vert_{\eta\hat{\otimes}x}=\vert1\otimes f\vert_{\eta\hat{\otimes}x}=\vert f\vert_x
\end{equation*}
as well as 
\begin{equation*}
\vert f\vert_{\mu^{an}(\eta\hat{\otimes}x)}=\Big\vert\sum_{s\in S_\sigma}a_s\chi^s\otimes\chi^s\Big\vert_{\eta\hat{\otimes}x}=\max_{s\in S_\sigma}\vert a_s \vert \vert\chi^s\vert_x=\vert f\vert_{\bfp_\sigma(x)} 
\end{equation*}
and this implies our claim.
\end{proof}

\begin{proof}[Proof of Theorem \ref{thm_trop=quot}]
By Proposition \ref{prop_topanalstack} (ii) the topological space $\big\vert[X^{an}/T^\circ] \big\vert$ is the topological colimit of the maps 
\begin{equation}\label{equation_quot=colimit}
\big(\pi^{an},\mu^{an}\mathrel{\mathop:} T^\circ \times X^{an}\rightrightarrows X^{an}\big) \ .
\end{equation}
Therefore, by Lemma \ref{lemma_trop=skeleton} we only need to show that the deformation retraction $X^{an}\rightarrow  \frakS(X)$ makes $\frakS(X)$ into a colimit of \eqref{equation_quot=colimit}. Since $\mathbf{p}_\Delta$ is determined on the $T$-invariant open affine subsets $U_\sigma$ it is enough to prove this statement for $U_\sigma$. 
\begin{itemize}
\item Let $x, x'\in U_\sigma^{an}$ and $y\in T^\circ\times U_\sigma^{an}$ such that $\pi^{an}(y)=x$ and $\mu^{an}(y)=x'$. Then we have $\bfp_\sigma(x)=\bfp_\sigma(x')$, since 
\begin{equation*}\begin{split}
\vert\chi^{s}\vert_{x'}&=\vert \chi^s\vert_{\mu^{an}(y)}=\vert\chi^s\otimes\chi^s\vert_y\\&=\vert \chi^s\otimes 1\vert_y \cdot\vert 1\otimes \chi^s\vert_y=\vert1\otimes\chi^s\vert_y\\&=\vert\chi^s\vert_{\pi^{an}(y)}=\vert \chi^s\vert_x
\end{split}\end{equation*}
for all $s\in S_\sigma$, since $\vert\chi^m\otimes 1\vert_y=1$ for all $m\in M$.
\item Given $x\in U_\sigma^{an}$ by Lemma \ref{lemma_otimeshatnonbeth} there is a point $y=\eta\hat{\otimes}x\in T^\circ\otimes U_\sigma^{an}$ such that $\pi^{an}(y)=x$ and $\mu^{an}(y)=\bfp_\sigma(x)$. Given two points $x,x'\in U_\sigma^\beth$ such that $\bfp_\sigma(x)=\bfp_\sigma(x')$, their image in $\big\vert[U_\sigma^{an}/T^\circ]\big\vert$ is therefore equal. 
\end{itemize}
Thus the skeleton $\frakS(U_\sigma)$ is the set-theoretic colimit of \eqref{equation_quot=colimit}. It is a colimit in the category of topological spaces, since $\bfp_\sigma$ is continuous and proper. 
\end{proof}






\bibliographystyle{amsalpha}
\bibliography{biblio}{}

\newcommand{\etalchar}[1]{$^{#1}$}
\providecommand{\bysame}{\leavevmode\hbox to3em{\hrulefill}\thinspace}
\providecommand{\MR}{\relax\ifhmode\unskip\space\fi MR }
\providecommand{\MRhref}[2]{%
  \href{http://www.ams.org/mathscinet-getitem?mr=#1}{#2}
}
\providecommand{\href}[2]{#2}
\begin{thebibliography}{ACMW14}

\bibitem[ACM{\etalchar{+}}15]{AbramovichChenMarcusUlirschWise_logsurvey}
Dan Abramovich, Qile Chen, Steffen Marcus, Martin Ulirsch, and Jonathan Wise,
  \emph{Skeletons and fans of logarithmic structures}, arXiv:1503.04343 [math]
  (2015), Proceedings of the Simons Symposium on Non-Archimedean and Tropical
  Geometry, to appear.

\bibitem[ACMW14]{AbramovichChenMarcusWise_boundedness}
Dan Abramovich, Qile Chen, Steffen Marcus, and Jonathan Wise, \emph{Boundedness
  of the space of stable logarithmic maps}, {arXiv}:1408.0869 [math] (2014),
  Journal of the European Mathematical Society, to appear.

\bibitem[ACP15]{AbramovichCaporasoPayne_tropicalmoduli}
Dan Abramovich, Lucia Caporaso, and Sam Payne, \emph{The tropicalization of the
  moduli space of curves}, Ann. Sci. \'Ec. Norm. Sup\'er. (4) \textbf{48}
  (2015), no.~4, 765--809. \MR{3377065}

\bibitem[AV02]{AbramovichVistoli_compactifyingstablemaps}
Dan Abramovich and Angelo Vistoli, \emph{Compactifying the space of stable
  maps}, J. Amer. Math. Soc. \textbf{15} (2002), no.~1, 27--75 (electronic).
  \MR{1862797 (2002i:14030)}

\bibitem[AW13]{AbramovichWise_invlogGromovWitten}
Dan Abramovich and Jonathan Wise, \emph{Invariance in logarithmic
  {G}romov-{W}itten theory}, {arXiv}:1306.1222 [math] (2013).

\bibitem[Ber90]{Berkovich_book}
Vladimir~G. Berkovich, \emph{Spectral theory and analytic geometry over
  non-{A}rchimedean fields}, Mathematical Surveys and Monographs, vol.~33,
  American Mathematical Society, Providence, RI, 1990. \MR{1070709 (91k:32038)}

\bibitem[Ber93]{Berkovich_etalecoho}
\bysame, \emph{\'{E}tale cohomology for non-{A}rchimedean analytic spaces},
  Inst. Hautes \'Etudes Sci. Publ. Math. (1993), no.~78, 5--161 (1994).
  \MR{1259429 (95c:14017)}

\bibitem[CFPU14]{CheungFantiniParkUlirsch_faithfulrealizability}
Man-Wai Cheung, Lorenzo Fantini, Jennifer Park, and Martin Ulirsch,
  \emph{Faithful realizability of tropical curves}, arXiv:1410.4152 [math]
  (2014), International Mathematical Research Notices, to appear.

\bibitem[Che14]{Chen_stablelogmaps}
Qile Chen, \emph{Stable logarithmic maps to {D}eligne-{F}altings pairs {I}},
  Ann. of Math. (2) \textbf{180} (2014), no.~2, 455--521. \MR{3224717}

\bibitem[CLO12]{ConradLieblichOlsson_Nagataalgspaces}
Brian Conrad, Max Lieblich, and Martin Olsson, \emph{Nagata compactification
  for algebraic spaces}, J. Inst. Math. Jussieu \textbf{11} (2012), no.~4,
  747--814. \MR{2979821}

\bibitem[CLS11]{Cox_toric}
David~A. Cox, John~B. Little, and Henry~K. Schenck, \emph{Toric varieties},
  Graduate Studies in Mathematics, vol. 124, American Mathematical Society,
  Providence, RI, 2011. \MR{2810322 (2012g:14094)}

\bibitem[CT]{ConradTemkin_descent}
Brian Conrad and Michael Temkin, \emph{Descent for non-{A}rchimedean analytic
  spaces}.

\bibitem[CT09]{ConradTemkin_algspaces}
\bysame, \emph{Non-{A}rchimedean analytification of algebraic spaces}, Journal
  of Algebraic Geometry \textbf{18} (2009), no.~4, 731–788.

\bibitem[Duc11]{Ducros_familiesBerkovich}
Antoine Ducros, \emph{Families of {B}erkovich spaces}, {arXiv}:1107.4259 [math]
  (2011).

\bibitem[Ful93]{Fulton_toricvarieties}
William Fulton, \emph{Introduction to toric varieties}, Annals of Mathematics
  Studies, vol. 131, Princeton University Press, Princeton, NJ, 1993, The
  William H. Roever Lectures in Geometry. \MR{1234037 (94g:14028)}

\bibitem[Gir71]{Giraud_cohomologienonabelienne}
Jean Giraud, \emph{Cohomologie non ab\'elienne}, Springer-Verlag, Berlin-New
  York, 1971, Die Grundlehren der mathematischen Wissenschaften, Band 179.
  \MR{0344253 (49 \#8992)}

\bibitem[GS13]{GrossSiebert_logGromovWitten}
Mark Gross and Bernd Siebert, \emph{Logarithmic {G}romov-{W}itten invariants},
  J. Amer. Math. Soc. \textbf{26} (2013), no.~2, 451--510. \MR{3011419}

\bibitem[Kaj08]{Kajiwara_troptoric}
Takeshi Kajiwara, \emph{Tropical toric geometry}, Toric topology, Contemp.
  Math., vol. 460, Amer. Math. Soc., Providence, RI, 2008, pp.~197--207.
  \MR{2428356 (2010c:14078)}

\bibitem[Kat12]{Katz_lifting}
E.~Katz, \emph{{Lifting tropical curves in space and linear systems on
  graphs}}, Adv. Math. \textbf{230} (2012), no.~3, 853--875. \MR{2921163}

\bibitem[KM97]{KeelMori_groupoidquotients}
Se{\'a}n Keel and Shigefumi Mori, \emph{Quotients by groupoids}, Ann. of Math.
  (2) \textbf{145} (1997), no.~1, 193--213. \MR{1432041 (97m:14014)}

\bibitem[Mik05]{Mikhalkin_enumerativetrop}
Grigory Mikhalkin, \emph{Enumerative tropical algebraic geometry in {$\Bbb
  R^2$}}, J. Amer. Math. Soc. \textbf{18} (2005), no.~2, 313--377. \MR{2137980
  (2006b:14097)}

\bibitem[NS06]{NishinouSiebert_toricdeg&tropcurves}
T.~Nishinou and B.~Siebert, \emph{{Toric degenerations of toric varieties and
  tropical curves}}, Duke Math. J. \textbf{135} (2006), no.~1, 1--51.

\bibitem[Ols15]{Olsson_algspaces&stacks}
Martin Olsson, \emph{Algebraic spaces and stacks}, 2015, Manuscript in
  preparation.

\bibitem[Pay09]{Payne_anallimittrop}
Sam Payne, \emph{Analytification is the limit of all tropicalizations}, Math.
  Res. Lett. \textbf{16} (2009), no.~3, 543--556. \MR{2511632 (2010j:14104)}

\bibitem[PY14]{PortaYu_analyticnstacks}
Mauro Porta and Tony~Yue Yu, \emph{Higher analytic stacks and {GAGA} theorems},
  arXiv:1412.5166 [math] (2014).

\bibitem[Rab12]{Rabinoff_newtonpolygon}
Joseph Rabinoff, \emph{Tropical analytic geometry, {N}ewton polygons, and
  tropical intersections}, Adv. Math. \textbf{229} (2012), no.~6, 3192--3255.
  \MR{2900439}

\bibitem[Ran15a]{Ranganathan_ratcurvesontorvars}
Dhruv Ranganathan, \emph{Moduli of rational curves in toric varieties and
  non-{Archimedean} geometry}, arXiv:1506.03754 [math] (2015).

\bibitem[Ran15b]{Ranganathan_Artinfanrealizability}
\bysame, \emph{Superabundant curves and the {Artin} fan}, arXiv:1504.08199
  [math] (2015), International Mathematical Research Notices, to appear.

\bibitem[Sim96]{Simpson_nstacks}
Carlos Simpson, \emph{Algebraic (geometric) $n$-stacks},
  {arXiv}:alg-geom/9609014 (1996).

\bibitem[Spe05]{Speyer_thesis}
D.~Speyer, \emph{{Tropical geometry}}, Ph.D. thesis, University of California,
  Berkeley, 2005.

\bibitem[{Sta}15]{stacks-project}
The {Stacks Project Authors}, \emph{\itshape {Stacks Project}},
  \url{http://stacks.math.columbia.edu}, 2015.

\bibitem[Thu07]{Thuillier_toroidal}
Amaury Thuillier, \emph{G\'eom\'etrie toro\"\i dale et g\'eom\'etrie analytique
  non archim\'edienne. {A}pplication au type d'homotopie de certains sch\'emas
  formels}, Manuscripta Math. \textbf{123} (2007), no.~4, 381--451. \MR{2320738
  (2008g:14038)}

\bibitem[Uli13]{Ulirsch_functroplogsch}
Martin Ulirsch, \emph{Functorial tropicalization of logarithmic schemes: The
  case of constant coefficients}, {arXiv}:1310.6269 [math] (2013).

\bibitem[Uli15]{Ulirsch_thesis}
\bysame, \emph{Tropical geometry of logarithmic schemes}, Ph.D. thesis, Brown
  University, 2015.

\bibitem[Uli16]{Ulirsch_nonArchArtin}
\bysame, \emph{Non-{Archimedean} geometry of {Artin} fans}, arXiv:1603.07589
  [math] (2016).

\bibitem[Vis89]{Vistoli_intersectiontheorystacks}
Angelo Vistoli, \emph{Intersection theory on algebraic stacks and on their
  moduli spaces}, Invent. Math. \textbf{97} (1989), no.~3, 613--670.
  \MR{1005008 (90k:14004)}

\bibitem[Vis05]{Vistoli_stacks}
\bysame, \emph{Grothendieck topologies, fibered categories and descent theory},
  Fundamental algebraic geometry, Math. Surveys Monogr., vol. 123, Amer. Math.
  Soc., Providence, RI, 2005, pp.~1--104. \MR{2223406}

\bibitem[Yu14]{Yu_Gromovcompactness}
Tony~Yue Yu, \emph{Gromov compactness in non-archimedean analytic geometry},
  arXiv:1401.6452 [math] (2014), Journal f{\"u}r die reine und angewandte
  Mathematik, to appear.

\end{thebibliography}

\end{document}